\newcommand{\C}{\mathbb{C}}
\newcommand{\N}{\mathbb{N}}
\newcommand{\R}{\mathbb{R}}
\newcommand{\U}{\mathcal{U}}
\renewcommand{\epsilon}{\varepsilon}
\newcommand{\brak}[1]{\left\langle#1\right\rangle}
\newtheorem{theorem}{Theorem}[section]
\newtheorem{definition}[theorem]{Definition}
\newtheorem{corollary}[theorem]{Corollary}
\newtheorem{lemma}[theorem]{Lemma}
\newtheorem{proposition}[theorem]{Proposition}
\theoremstyle{remark}
\newtheorem{Nota}[theorem]{Nota}
\numberwithin{equation}{section}
\providecommand{\U}[1]{\protect\rule{.1in}{.1in}}
\begin{document}

\title{Coagulation dynamics under random field: turbulence effect on rain}



\author[A. Papini]{Andrea Papini}
\address{Scuola Normale Superiore, Piazza dei Cavalieri, 7, 56126 Pisa, Italia}
\email{\href{mailto:andrea.papini@sns.it}{andrea.papini@sns.it}}

\date\today

\begin{abstract}
    Turbulence in growth of rain droplet and rain formation is studied under an approximating particle system representing aggregation at the level of individuals, depending on their volume and distance in space, of the Smoluchoski Coagulation equation. A random field is introduced to model the air flow interaction with the particle and it is proved that the empirical density of the individual converges to solution of limiting equation under different setting for the random field of interaction. A brief numerical study for the continuous density is proposed using the particles approach, to analyze how noise can arise in such system and the effectiveness on rain formation.
\end{abstract}

\maketitle
\tableofcontents

\section{Introduction}
Although clouds play a crucial role in atmospheric phenomena, the effect of turbulence in cloud formation is not well established up to the present.\\
Two processes mutually affecting each other characterize the cloud formation and precipitation development: macro-scale processes such as the fluid motion of air associated with
clouds and the micro-scale processes such as condensation, stochastic
coalescence, and evaporation of water droplets. Thus we can say that cloud formation and precipitation development are typical multiscale-multiphysics phenomena.\\
Motivated by the study of \cite{SuperDrop}, in which the authors propose a novel algorithm to compute the stochastic coalescence of large number of droplet; and \cite{EulLag}, in which the problem of the role of turbulence in the cloud droplet growth is treated with simulation of density equation; we investigate the time of first formation of a raindrop, for the spatial and volume density in a cloud formation, under different settings, to explore the role of turbulent flow in the aggregation of small droplets in clouds.\\
\\
In doing so, we consider the random Smoluchowski coagulation equation for the spatial-volumetric density of the atmosphere
\begin{align}\label{smoleq}
    &\partial_t \rho_t(x,v) + \sum_{j\in H}\left(\sigma_j(x,v)\cdot \nabla_x\rho_t(x,v)\right)\xi^j_t=\varepsilon\Delta_x\rho_t(x,v)+F_v(\rho_t(x,\cdot))\\
    &F_v(\rho_t(x,\cdot)):=\frac{1}{2}\int_0^v\rho_t(x,w)\rho_t(x,v-w)f(w,v-w) dw-\int_{\R_+}\rho_t(x,v)\rho_t(x,w)f(v,w) dw\nonumber
\end{align}
Although Coagulation equations with diffusion are already being studied \cite{aldous1} both in discrete (\cite{hamsurv},\cite{ball1990discrete}) and continuous setting (\cite{hamsurv},\cite{1937-1632_2020_12_3319}), as limit of particle system (\cite{YAGHOUTI20093042},\cite{2006JSP...124..997H,},\cite{ines}) or as regularity of its solution \cite{AIHPC_2010__27_2_639_0}. Little is know about the counterpart $(\ref{smoleq})$ which capture in detail the more complex motion of rain droplet in the atmosphere and is object of this study. Here a diffusion term and a random field act on the cloud density taking account of the macro-scale process, such as fluid motion, acting on the formation.\\
The airflow in which they interact, solution of Navier-Stokes equation, is thus replaced with a random field with finite correlation time, depending on an environmental random noise
produced by Ornstein-Uhlenbeck processes.\\
This choice stems from the idea of Stochastic Model
Reduction \cite{maj} and can be replaced in future work with different approximations.\\
While the non linear term $F_v(\rho_t(x,\cdot))$ is the classic coagulation reaction interaction, as used in literature (\cite{10.1214/aoap/1029962598},\cite{aldous1},\cite{amann}).\\
\\
Objective of this paper is the understanding of quantities like
\begin{align}\label{tempcont}
    &\tau_f(\rho):=\inf\{t\geq 0 | \exists\ \eta >0,\ x\in D:\ \rho_t(x,R_{rd})\geq\eta\}
\end{align}
which represent the first time the density put enough mass on effective rain drops.\\
Our goal is to find numerical insight on the decreasing behaviour of such quantity in dependence of the parameter of the system.\\
To this end, we exchange density solution $\rho_t$ of $eq.$\ref{smoleq}, with an approximating particle system in which the aforementioned quantities $\tau_f(\rho)$ is replaced by
\begin{align}
    &\tau_f^N:=\inf\{t\geq 0 | \exists i\in N:\  R^i_t\geq R_{rd}\},
\end{align}
where $N$ represent the particles in the system and $R_i$, $R_{rd}$ are the radius of a particle and a real rain drop respectively.\\
\\
With this in mind, in Section.\ref{psm}, we rigorously define a particle system dynamics that, in addition to being easier to simulate, approximate  our key Smoluchoski equation \ref{smoleq}.\\
Warm clouds and rain consist of large number of water droplets with broad range of radius from micrometers to several millimeters. Several sorts of aerosol particles are also floating in the atmosphere, which can be a seed of a cloud droplet.\\
Let $N$ be the number of such individuals (rain droplets and dust particles) in the considered region, each particle is characterized by its position and volume: $\left(X_t^{i,N},V_t^{i,N}\right),\ i=1, .., N$.\\
To describe the dynamics we see how particle acts on observable $F$ of the system, using the infinitesimal generator $\mathcal{L}$ of the dynamical process.
$$
\mathcal{L}F(\eta)=\mathcal{L}_D F(\eta)+\mathcal{L}_C F(\eta).
$$
The first term on the r.h.s. represents the diffusion and random field acting on the particles
$$
\mathcal{L}_D F(\eta):=\sum_i \varepsilon\Delta_{x_i}F(\eta)+\sum_i \mathcal{U}_t(x_i)\cdot\nabla_{x_i}F(\eta).
$$
While the second term, representing the coalescence of the particle, is a mean field counterpart of the classic coagulation interaction term:
\begin{align*}
&\mathcal{L}_C F(\eta):=\sum_{i,j}T_N(i,j)\left[\mathbb{I}_{i,j}F(\eta^{i,j})+\mathbb{I}_{j,i}F(\eta^{j,i})-F(\eta)\right]\\
&\mathbb{I}_{i,j}:=\mathbb{I}_{(v_i>=v_j)},\ \mathbb{I}_{j,i}:=\mathbb{I}_{(v_j>v_i)};\\
&T_N^\delta(i,j):=\frac{1}{2} N^{-1}\delta^{-3}\mathbb{I}_{B_{x_i}(\delta f(v_i,v_j))}(x_j)k(v_i,v_j)
\end{align*}
Defining $\mu_t(x,v):=\frac{1}{N}\sum_{i=1}^N \delta_{(x_t^i,v_t^i)}$ as the empirical measure of the system and coupling it with the random noise $\xi_t^h,\ h\in H$ we are able to obtain convergence $(Thm.\ 2.1)$ of the approximating particle dynamics to a solution of the non local Random Smoluchoski equation, parameterized by $\delta$
\begin{align}\label{dsmoleq}
    &\partial_t \rho^\delta_t(x,v) + \sum_{j\in H}\left(\sigma_j(x,v)\cdot \nabla_x\rho^\delta_t(x,v)\right)\xi^j_t=\varepsilon\Delta_x\rho^\delta_t(x,v)+F^\delta_v(\rho^\delta_t(x,\cdot))
\end{align}
where $F^\delta$ is the non local coagulation kernel closely related to equation \ref{smoleq} as we have shown in \ref{locint}.\\
\\
To this end, Section.\ref{tcseq} is devoted to a formal study of the mean field limit equation. In the section we show that, under suitable hypothesis, steamed from (\cite{article1},\cite{article2}), we are able to recover a solution of the coagulation-position system (\ref{smoleq}) as show in Proposition $3.1$ as a local convergence in the volume variable
\begin{align*}
    \sup_{t\in[0,T]}\|\rho_t-\rho_t^\delta\|_{L_x^2(\R^d)L^1_{v}(0,R)}\rightarrow 0,\ as\ \delta\rightarrow0,\ \forall\ R>0, \mathbb{P}-a.s.
\end{align*}
Justifying the use of the approximating particle system to compute meaningful quantities of the system of interest and recover information on the effect of the random field on coagulation of droplet in the the region  of space considered.\\
\\
As such, in Section.\ref{numan}, numerical regressions are performed on the particle system to study the mean time formation of rain droplet to recover a decay dependence of $\tau_f^N$ in dependence of the parameter of the random field $|H|$ and $\lambda$: the number of vortex and the intensity of the environmental noise.\\
In particular to analyze how an independent noise and a random turbulent vortex flow could affect the time of rain formation in such a system and the effectiveness of the formation.\\
We studied two different settings: first, as a benchmark, we show numerical results concerning the simulation of
particles subjected only to independent Brownian Motion. We recover the classical conjecture such as the quadratic decay of the time formation with respect to the intensity of the independent noise.\\
The second setting considers the particles under the turbulent random flow and we find an analogous decay of the time of first rain formation, yet with a less efficient rate than the independent noise.\\
In particular, the numerical results show that is expected to have a behavior of such exit time of the form
$$
\mathbb{E}[\tau_f^N]\sim p(|H|,\lambda)^{-1},
$$
with $p(\cdot,\cdot)$ polynomial of at most degree two; showing that although with a less efficient rate that independent theoretical Brownian motion, turbulence produced by the motion of the droplets in the atmosphere under the wind flow enhance the coagulation and the rain formation.
\section{Particle System Model}\label{psm}
The microscopic model that we study in this article consists of a large number of particles, in the space $\R^d,\ d\geq 3$, which move according to independent Brownian motions and a random field acting as the wind on the particle.
\begin{align*}
&dX_t^i=\mathcal{U}_t(X^i_t,\omega)dt+\varepsilon dB^i_t\\
&\mathcal{U}_t(x):=\sum_{k\in K}\sigma_k(x)\xi^k_t\\
&d\xi_t=-\lambda\xi_t dt+\lambda dW^i_t,\ \xi_0=0.
\end{align*}
This velocity field $\mathcal{U}:=\mathcal{U}^{\lambda}, \lambda>1$, with finite correlation time, is of the following form:
\[
\mathcal{U}^{\lambda}\left(  t,x\right)  =\sum_{h\in H}\sigma_{h}\left(  x\right)  \xi_{t}^{h,\lambda}%
\]%
where $H$ is a set of index with finite cardinality, $\{\sigma_h\}_h$ is a suitable family of time independent vector field and $\{\xi_t^{h,\lambda}\}_h$ is a family of stationary i.i.d. Ornstein-Uhlenbeck processes with covariance $cov(\xi_t^{h,\lambda},\xi_s^{h,\lambda})=\frac{\lambda}{2}exp(\lambda |t-s|)$, solution of
\[
d\xi_{t}^{h}  =-\lambda\xi_{t}^{h}dt+\sigma dW_{t}^{h},\ \ \  
\xi_{0}^{h}  =0,\ k  =1,...,K.\\
\]%
For a family of i.i.d. Brownian motions $\{W_t^h\}_{h\in H}$ on a probability space $\left(\Omega, \{\mathcal{F}_t\}_{t\in[0,T]}, \mathbb{P}\right)$. Up to changing the filtration, we also have an explicit formula for the solution of such object
$$
\xi_t^{h,\lambda}=\lambda\int_0^t\ e^{-\lambda(t-s)}dW_s^h,\ t\in[0,T]
$$
During our work we select smooth vector fields $\{\sigma_h\}_h$, ideally as smooth approximation of the Biot-Savart Kernel, such that fixing point $x_h\in\R^2$ is defined as
\[
\sigma_{h}\left(  x\right)  \sim\frac{1}{2\pi}\frac{\left(  x-x_{h}\right)
^{\perp}}{\left\vert x-x_{h}\right\vert ^{2}},\ \left(  a,b\right)  ^{\perp}:=\left(  b,-a\right)%
\]%
This choice, admittedly phenomenological, come from the idea that one would like to take
$\mathcal{U}_t(x)\equiv U(x)$, the solution of the Navier-Stokes equations for the wind velocity, or a slight modification, and the use of this "vortex like structure" was the proposed approximation.\\
\\
To each particle we attach a value $v\in (0,\infty)$ which represent the volume of the particle (we assume that mass and volume are around the same and can be recovered from one and another). In the mean field model any pair of particles that approach to within a certain range of interaction are liable to coagulate, at which time they disappear from the system, to be replaced by a particle whose volume is equal to the sum of the volumes of the colliding particles, and whose location is a specific point in the vicinity of the location of the coagulation.\\
\\
As a matter of convenience, we introduce the microscopic model, where the number of particles is initially deterministic. We define a sequence of microscopic models,indexed by a positive integer $N$. A countable set $I:=\N,...$ of symbols is provided. A configuration $\eta$ is an $\R^d\times(0,\infty)-valued$ function on a finite subset $I_\eta\subset I$. For any $i\in I_\eta$, the component $\eta(i)$ may be written as $(x_i,v_i)$. The particle labelled by $i$ has volume $v_i$ and location $x_i$.\\
To describe this dynamics we need to see how the particle acts on observable of the system.\\ Let $F:(\R^d\times\R^0_+)^N\rightarrow\R$ denote a function such that $F:(\R^d\times\R_+)^N\rightarrow\R$ is smooth, and $F(x,0)=0,\forall x\in\R^d$. The action on $F$ of the infinitesimal generator $\mathcal{L}$ is given by
$$
\mathcal{L}F(\eta)=\mathcal{L}_D F(\eta)+\mathcal{L}_C F(\eta).
$$
The first term on the r.h.s. represents the diffusion and random field acting on the particles
$$
\mathcal{L}_D F(\eta):=\sum_i \varepsilon\Delta_{x_i}F(\eta)+\sum_i \mathcal{U}_t(x_i)\cdot\nabla_{x_i}F(\eta).
$$
While for the second term, representing the coalescence of the particle we have:
\begin{align*}
&\mathcal{L}_C F(\eta):=\sum_{i,j}T_N(i,j)\left[\mathbb{I}_{i,j}F(\eta^{i,j})+\mathbb{I}_{j,i}F(\eta^{j,i})-F(\eta)\right]\\
&\mathbb{I}_{i,j}:=\mathbb{I}_{(v_i>=v_j)},\ \mathbb{I}_{j,i}:=\mathbb{I}_{(v_j>v_i)};\\
&T_N^\delta(i,j):=\frac{1}{2} N^{-1}\delta^{-3}\mathbb{I}_{B_{x_i}(\delta f(v_i,v_j))}(x_j)k(v_i,v_j)
\end{align*}
here we have indicated 
$$
\eta^{i,j}_k=\begin{cases} 
\eta(k), & \textit{if}\ k\neq i,j\\
(x_i,v_i+v_j), & \textit{if}\ k=i\\
(x_i,0), & \textit{if}\ k=j.
\end{cases}
$$
We note here, as one can see from $\mathcal{L}_C$, that the coalescence generator is as such that two particles interact if they are close in the space depending not only on the radius, but also on some fixed scaling quantities $\delta$.\\
This is done to obtain a scaling limit in the mean field regime for the system. As such, we needed to scale the $T$ interaction Kernel with a factor $1/N$, that take into account the interaction with the particle as $N$ grows.\\
In particular, as $N$ grows the particle are nearer, with mean distance $N^{-1/d}$, and so when we consider the $T$ function we should make a rescale in the position: two particles interact if $|X_t^{i,N}-X^{j,N}_t|\leq N^{-1/d}$, and since in our case the interaction depends also on the volume of each raindrop, we would like to have
$$
|X_t^{i,N}-X^{j,N}_t|\leq N^{-1/d}f(V_t^{i,N},V^{j,N}_t))
$$
and thus making it a local problem. This is out of our capacity for now, so we select a rescaling parameter $\delta$ such that $T$ is substituted whit $T^\delta$ of the form
$$
\delta^{\alpha}\mathbb{I}_{B_x(\delta f(v,w)}g(v,w)
$$
where $\alpha$ depend on the system (in particular the dimension) and $\delta$ depends on $N$.\\
\\
We note that $(x_i(t),v_i(t))\in C([0,T],R^d)\times \mathcal{D}([0,T],R_+)$, we consider now for each $N$ the empirical measure associated to the particles
$$
\mu_t^N:=\frac{1}{N}\sum_{i=1}^N \delta_{(X_i^N(t),V_i^N(t))}\in \mathcal{D}([0,T],\mathcal{M}_1(\R^d\times\R_+)
$$
As in the other setting we consider the pair $(\mu_t^N,\xi_t)\in \mathcal{D}([0,T],\mathcal{M}_1(\R^d\times\R_+))\times C([0,T],\R)$. If we consider now $\phi\in C^{1,2}_b$, $\phi(x,0)=0$, non necessarily continuous in $v=0$. we can compute the infinitesimal generator acting on $\brak{\mu_t^N,\phi}$ and we have
\begin{align*}
    \brak{\mu_t^N,\phi}=&\brak{\mu_0^N,\phi}+\int_0^t\varepsilon\brak{\mu_s^N,\Delta\phi}ds+\int_0^t \brak{\mu_s^N,\mathcal{U}_s\cdot\nabla\phi}ds +\\
    &+\int_0^t \brak{\mu_t^N,\brak{\mu_t^N,T^\delta(x,v,y,w)J^\phi(x,v,y,w)}}ds+\tilde{M}_t
\end{align*}
where
\begin{align*}
&T^\delta(x,v,y,w):=\frac{1}{2}\delta^{-3}\mathbb{I}_{B_{x}(\delta f(v,w))}(y)k(v,w);\\
&J^\phi(x,v,y,w):=\left[\mathbb{I}_{(v>=w)}\phi(x,v+w)+\mathbb{I}_{v<w}\phi(y,v+w)-\phi(x,v)-\phi(y,w)\right]
\end{align*}
$\tilde{M}_t$ is a martingale and we can compute is quadratic variation:
$$
\mathbb{E}\left[\tilde{M}_t^2\right]=\mathbb{E}\left[\int_0^t\left(\mathcal{L}\brak{\mu_s^N,\phi}^2-2\brak{\mu_s^N,\phi}\mathcal{L}\brak{\mu_s^N,\phi}ds \right)\right]
$$
which we can compute in two parts $M_D,\ M_C$ such that:
\begin{align*}
& \brak{M_D}:=\mathbb{E}\left[\int_0^t N^{-1}\brak{\mu_t^N,|\nabla\phi|^2}ds\right]\\
& \brak{M_C}:=\mathbb{E}\left[\int_0^t \frac{1}{N^3}\sum_{i,j}T_t^{\delta}(i,j)J_t^{\phi}(i,j)^2ds\right]=\mathbb{E}\left[\int_0^t N^{-1}\brak{\mu_s^N,\brak{\mu_s^N, T^\delta(x,v,y,w)J^\phi(x,v,y,w)^2}}ds\right]
\end{align*}
The last computation is a very tedious one, but not difficult.\\

Our result is the following
\begin{theorem}
Assuming that $(\mu,\xi)$ realise a limiting law of the sequence $(\mu^N,\xi)$. Then $\mu$ is a continuous process taking values in $\mathcal{M}_1(\R^{d}\times\R_+)$ and satisfies the limit random PDE:
\begin{align*}
   \brak{\mu_t,\phi}-\brak{\mu_0,\phi}=&\int_0^t\ \brak{\mu_s,\frac{\varepsilon_x^2}{2}\Delta_x\phi}ds+ \int_0^t\ \sum_{j\in J}\brak{\mu_s,\sigma_j(x,v)\cdot\nabla_x\phi}\xi^j_s ds \\
   &+\int_0^t\ \brak{\mu_s,\brak{\mu_s,T^\delta J^\phi}_{y,w}}_{x,v}ds
\end{align*}
for every $t\in[0,T]$ and $\phi\in C_{test}$, with probability 1. Here:
\begin{align*}
&T^\delta(x,v,y,w):=\frac{1}{2}\delta^{-3}\mathbb{I}_{B_{x}(\delta f(v,w))}(y)k(v,w);\\
&J^\phi(x,v,y,w):=\left[\mathbb{I}_{(v>=w)}\phi(x,v+w)+\mathbb{I}_{v<w}\phi(y,v+w)-\phi(x,v)-\phi(y,w)\right].
\end{align*}
Moreover, if $\mu_0$ has finite volume of the particle, also $\mu_t$ does and we have:
$$
\sup_{t\leq T}\int_{\R^d\times\R_+}v\mu_t(dx,dv)\leq \int_{\R^d\times\R_+}v\mu_0(dx,dv)
$$
\end{theorem}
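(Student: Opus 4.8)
The plan is to pass to the limit, along the subsequence realising $(\mu,\xi)$ as a limit law of $(\mu^N,\xi)$, in the pre-limit weak identity for $\brak{\mu^N_t,\phi}$ displayed just before the statement. By the Skorokhod representation theorem we may assume that, on one probability space, $(\mu^N,\xi)\to(\mu,\xi)$ almost surely in $\D([0,T],\M_1(\R^d\times\R_+))\times C([0,T],\R^{\abs{H}})$. There are three points to establish: (i) $\mu$ has continuous, $\M_1$-valued trajectories; (ii) $\mu$ solves the stated weak equation; (iii) the first-volume bound. For (i), a single elementary transition — an infinitesimal displacement of one particle or one binary coalescence — changes $\brak{\mu^N_t,\phi}$ by a quantity of order $1/N$ (explicitly $\frac1N[\phi(x_i,v_i+v_j)-\phi(x_i,v_i)-\phi(x_j,v_j)]$ for a coalescence, using $\phi(\cdot,0)=0$), so $\sup_{t\le T}\abs{\brak{\mu^N_t,\phi}-\brak{\mu^N_{t-},\phi}}\le 3\norm{\phi}_\infty/N\to 0$; hence the Skorokhod limit $\mu$ has no jumps, is continuous, and $\mu^N\to\mu$ locally uniformly. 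That $\mu_t\in\M_1$ follows from the uniform tightness of $\{\mu^N_t\}_{N,t}$: in $v$ from $\mu^N_t(\{v>R\})\le R^{-1}\brak{\mu^N_0,v}$ (the total volume being pathwise conserved, see below), and in $x$ from $\mathbb{E}\,\brak{\mu^N_t,\abs{x}^2}\le \mathbb{E}\,\brak{\mu^N_0,\abs{x}^2}+CT$, which uses boundedness of the $\sigma_h$ — exactly the reason the Biot--Savart profile is taken only as a smooth approximation.

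\textbf{The martingale remainder and the linear terms.} The quadratic-variation computation recalled before the statement gives $\mathbb{E}[\tilde M^N_t{}^2]=\brak{M_D}+\brak{M_C}$, and both are $N^{-1}$ times a bounded functional of $\mu^N$ (using $\phi\in C^{1,2}_b$, and that for fixed $\delta$ both $k$ and $T^\delta$ are bounded), so $\tilde M^N_t\to 0$ in $L^2$, hence in probability. Since $\mu$ is continuous, every $s\in[0,T]$ is a continuity point of $s\mapsto\mu_s$, so $\brak{\mu^N_s,\Delta_x\phi}\to\brak{\mu_s,\Delta_x\phi}$ and $\brak{\mu^N_s,\sigma_h\cdot\nabla_x\phi}\to\brak{\mu_s,\sigma_h\cdot\nabla_x\phi}$ for every $s$ ($\Delta_x\phi$ and $\sigma_h\cdot\nabla_x\phi$ being bounded and continuous). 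These integrands are uniformly bounded and $\xi^h$ converges uniformly on $[0,T]$, so dominated convergence carries $\int_0^t\varepsilon\brak{\mu^N_s,\Delta_x\phi}ds$ and each $\int_0^t\brak{\mu^N_s,\sigma_h\cdot\nabla_x\phi}\xi^h_s\,ds$ to the corresponding limits; together with $\brak{\mu^N_0,\phi}\to\brak{\mu_0,\phi}$ this reduces the identification to the coalescence term.

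\textbf{The nonlinear term — the main obstacle.} Write the coalescence term as $\int_0^t\Phi(\mu^N_s)\,ds$ with $\Phi(\nu):=\brak{\nu\otimes\nu,\,T^\delta J^\phi}$. The map $\Phi$ is bounded, $\abs{\Phi(\nu)}\le C(\delta,\phi)$, but it is \emph{not} continuous on $\M_1$: the kernel $T^\delta J^\phi$ carries the indicators $\mathbb{I}_{B_x(\delta f(v,w))}(y)$ and $\mathbb{I}_{v\ge w}$, so $\Phi$ is continuous at $\nu$ only when $\nu\otimes\nu$ charges neither the sphere bundle $\{\abs{x-y}=\delta f(v,w)\}$ nor the diagonal $\{v=w\}$. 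I would resolve this by first showing the limit $\mu_s$ is non-atomic in $v$ and charges no sphere $\{\abs{x-y}=r\}$, $r>0$ — either from the smoothing effect of the non-degenerate diffusion $\varepsilon\Delta_x$ on the spatial marginal, or, more robustly, by sandwiching $T^\delta J^\phi$ between continuous mollifications $T^\delta_{\pm\kappa}J^\phi$, passing to the limit for those, and then letting $\kappa\to0$ while controlling $(\mu_s\otimes\mu_s)$ of a $\kappa$-neighbourhood of the discontinuity set. Granting this, $\Phi(\mu^N_s)\to\Phi(\mu_s)$ for a.e.\ $s$, and dominated convergence (using the uniform bound) yields the weak equation. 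This non-charging/mollification step is the part I expect to be the real obstacle; everything else is bookkeeping.

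\textbf{The first-volume bound.} Fix $R>0$ and pick $\psi_R\in C^{1,2}_b$ concave and non-decreasing in $v$, independent of $x$, with $\psi_R(0)=0$, $\psi_R(v)=v$ for $v\le R$, and $\psi_R\uparrow v$ as $R\to\infty$. Since $\psi_R$ does not depend on $x$ we have $\Delta_x\psi_R=0$ and $\nabla_x\psi_R=0$; and since $\psi_R$ is concave with $\psi_R(0)=0$ it is subadditive, so $J^{\psi_R}(x,v,y,w)=\psi_R(v+w)-\psi_R(v)-\psi_R(w)\le 0$. As $T^\delta\ge 0$, the weak equation applied to $\psi_R$ gives $\brak{\mu_t,\psi_R}-\brak{\mu_0,\psi_R}=\int_0^t\brak{\mu_s,\brak{\mu_s,T^\delta J^{\psi_R}}}ds\le 0$, hence $\brak{\mu_t,\psi_R}\le\brak{\mu_0,\psi_R}\le\int v\,\mu_0(dx,dv)$ for every $t$; monotone convergence as $R\to\infty$ gives $\int v\,\mu_t(dx,dv)\le\int v\,\mu_0(dx,dv)$, and since the right-hand side is independent of $t$ one takes the supremum over $t\le T$. (At the particle level this reflects the pathwise identity $\sum_i V^{i,N}_t=\sum_i V^{i,N}_0$ — displacement and coalescence both conserve total volume — the passage to the limit losing mass only through the truncation $\psi_R$, which is why the conclusion is an inequality rather than an equality.)
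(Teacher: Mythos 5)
Your proposal is correct in outline but takes a genuinely different route from the paper. The paper does not use Skorokhod representation and term-by-term almost-sure convergence: it introduces the defect functional $\Psi_{\phi}(\nu_\cdot,f_\cdot):=\sup_{t\le T}\big|\brak{\nu_t,\phi}-\brak{\nu_0,\phi}-\int_0^t\brak{\nu_s,\tfrac{\varepsilon_x^2}{2}\Delta_x\phi}ds-\int_0^t\brak{\nu_s,\sigma\cdot\nabla_x\phi}f_s\,ds-\int_0^t\brak{\nu_s,\brak{\nu_s,T^\delta J^\phi}}ds\big|$, asserts that it is continuous on $\mathcal{D}([0,T],Pr_1(\R^d\times\R_+))\times C([0,T],\R)$, applies the Portmanteau theorem along the weakly convergent subsequence, and then uses the pre-limit identity so that $\Psi_\phi(\mu^{N_k},\xi)$ reduces to the initial-condition error plus the martingale increment, which is killed by the quadratic-variation bound of order $N^{-1}$ (your Lemma-2.5-type estimate). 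The quantifier over $\phi$ is then exchanged via a countable dense family of test functions, and continuity of $t\mapsto\mu_t$ is read off from the continuity in $t$ of the right-hand side of the limiting identity, rather than from your $O(1/N)$ jump estimate (both arguments work; yours is the more standard Skorokhod-space one, the paper's is shorter once the identity is known for all $\phi$).

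The step you flag as the main obstacle is real, and it is also the weak point of the paper's own argument: weak convergence of $\mu^N_s$ does not by itself give convergence of $\brak{\mu^N_s\otimes\mu^N_s,T^\delta J^\phi}$, since $T^\delta J^\phi$ is discontinuous on the sphere bundle $\{|x-y|=\delta f(v,w)\}$ and on $\{v=w\}$, and the paper passes over this silently when it declares $\Psi_\phi$ continuous (for the Portmanteau step one would in addition want $\{\Psi_\phi>\tilde\delta\}$ open, i.e.\ lower semicontinuity, which raises the same issue). So your non-charging/mollification strategy is not a detour relative to the paper but the supplement both proofs need; in your write-up it remains a sketch, and to be complete you would have to actually prove that $\mu_s\otimes\mu_s$ does not charge the discontinuity set for a.e.\ $s$, or carry out the sandwiching with mollified kernels. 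Finally, your concave-truncation argument ($\psi_R$ with $J^{\psi_R}\le 0$, monotone convergence) proves the volume bound $\sup_{t\le T}\int v\,\mu_t(dx,dv)\le\int v\,\mu_0(dx,dv)$, a clause the paper states in the theorem but never proves; this is a genuine addition, modulo the small caveat that $\psi_R$ must be taken (or approximated) inside the admissible class of test functions, e.g.\ as a monotone limit of $C^{1,2}_b$ functions vanishing at $v=0$.
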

Later we'll show under suitable hypothesis how to pass to the limit as $\delta\rightarrow0$ and retrieve that $\mu_t^\delta\rightarrow\mu_t$ where $\mu_t$ is a solution in the sense of weak formulation for the Random Smoluchoski equation, justifying our particle system approximation scheme:
\begin{definition}
A continuous family of measure $\mu_t\in C([0,T],\mathcal{M}_1(\R^d\times\R)$, given $\xi_t$ as before, is a weak solution of the RSPDE if
\begin{align*}
   \brak{\mu_t,\phi}-\brak{\mu_0,\phi}=&\int_0^t\ \brak{\mu_s,\frac{\varepsilon_x^2}{2}\Delta_x\phi}ds+ \int_0^t\ \sum_{j\in J}\brak{\mu_s,\sigma_j(x,v)\cdot\nabla_x\phi}\xi^j_s ds \\
   &+\int_0^t\ \brak{\mu_s,\brak{\mu_s,k(v,w)T(v,w) J_{x,v,w}^\phi}_{x,w}}_{x,v}ds
\end{align*}
for every $t\in[0,T]$ and $\phi\in C_{test}$, with probability 1. Here:
\begin{align*}
&K(v,w)T(v,w):=\frac{\pi}{2}\left(v^{1/3}+w^{1/3}\right)^3E(v,w),\ with\ E\in C^0_b;\\
&J^\phi(x,v,w):=\left[\phi(x,v+w)-\phi(x,v)-\phi(x,w)\right].
\end{align*}
\end{definition}

\subsection{Scaling Limit: Uniform Estimates and Compactness}
Since $\xi$ does not change in the sequence $(\mu^N,\xi)$, to prove tightness for law $Q^N:=\mathcal{L}(\mu^N,\xi)$ we need to prove it just for the marginal $\mu^N_t$, which is done in the following computation.\\
Suppose that the initial condition
$(X_0^{i,N},V_0^{i,N})$ of the system  are i.i.d from a probability distribution with law $\mu_0$ and $\mathcal{F}_0-measurable$, suppose also that:\\
\\
$a)$ $\sup_{i,N}\mathbb{E}\left[|X_0^{i,N}|+|V_0^{i,N}|\right]<\infty$;\\
$b)$ $\exists\mu_0\in Pr_1(\R^d\times\R_+)$ such that $\brak{\mu_0^N,\phi}\rightarrow_{\mathbb{P}}\brak{\mu_0,\phi}$, for every $\phi\in C^\infty(\R^d\times\R_+)$.\\
$c)$ $\int_{\R^d}v\mu_0(dx,dv)<\infty$;\\
\\
This is true under the particular assumption made in the introduction: $(X_0^{i,N},V_0^{i,N})=(X_0^i,V_0^{i})$, where $\{X_0^i,V_0^i\}_{i\in\N}$ is a sequence of i.i.d. $\mathcal{F}_0-$measurable r.v.'s with common law $\mu_0\in Pr_1(\R^d\times\R_+)$.\\
We also assume the following on system $(3.1)$:\\
\\
$c)$ $\mathcal{U}_t(x,v,\omega)$ is of the form $\sum_{h\in H}\sigma_h(x)D_h(v)\xi_t^h(\omega),\ |H|<\infty$, where $\sigma_h$ is regular  $\forall h, \|\sigma_h\|_\infty\leq K_\sigma<\infty$, and $D_h$ is smooth and bounded with a constant $K_D$;\\
$d)$ the interacting kernel for the volume of the particle is defined as $$T^\delta:=\delta^{-3}\mathbb{I}_{B_x(\delta f(v,w)}(y) k(v,w).$$ Here $k$ represents the efficiency collision and is such that $k\in C^0_b,\ \|k\|_\infty<\infty$, and $f$ is as usual.\\ We'll derive a real interaction kernel at the limit of $\delta\rightarrow0$ later in the next chapter.\\
\\
The crucial step
in the proof of tightness is to establish suitable uniform bounds for the increments of the process $\mu_t^N$.
\begin{proposition}
Let $T\geq 0$ and $\phi\in C^1,2$ be fixed. There exists a constant
$C> 0$ depending only on $T,\phi,\xi,K_\sigma,\delta$ such that for all $N\in\N$, any bounded stopping time
$\tau\leq T$ with respect to the filtration of $\mu^N$, and $\vartheta>0$, it holds
$$
\mathbb{E}\left[|\brak{\mu^N_{\tau+\vartheta}-\mu^N_\tau,\phi}|^2\right]\leq C\vartheta.
$$
\end{proposition}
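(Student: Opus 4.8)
The plan is to use the semimartingale decomposition of $\brak{\mu^N_t,\phi}$ displayed just before Theorem 2.1, namely to write
\[
\brak{\mu^N_{\tau+\vartheta}-\mu^N_\tau,\phi}
= \int_\tau^{\tau+\vartheta}\varepsilon\brak{\mu^N_s,\Delta\phi}\,ds
+ \int_\tau^{\tau+\vartheta}\brak{\mu^N_s,\mathcal{U}_s\cdot\nabla\phi}\,ds
+ \int_\tau^{\tau+\vartheta}\brak{\mu^N_s,\brak{\mu^N_s,T^\delta J^\phi}}\,ds
+ (\tilde M_{\tau+\vartheta}-\tilde M_\tau),
\]
and then bound the second moment of each of the four terms separately by a constant times $\vartheta$. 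Throughout I would use that $\mu^N_s$ is a probability measure, so $\brak{\mu^N_s,g}\le\|g\|_\infty$ for bounded $g$, together with the smoothness/boundedness hypotheses $(c)$ on $\mathcal U$ (hence $\|\mathcal{U}_s\|_\infty\le K_\sigma K_D\sum_h|\xi^h_s|$) and $(d)$ on the kernel $k$.

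First I would treat the three drift (bounded-variation) terms. For the diffusion term, $|\int_\tau^{\tau+\vartheta}\varepsilon\brak{\mu^N_s,\Delta\phi}\,ds|\le \varepsilon\|\Delta\phi\|_\infty\vartheta$ pointwise, so its square is $\le C\vartheta^2\le C'\vartheta$ on $[0,T]$. For the transport term I would use Cauchy–Schwarz in time to get a bound by $\vartheta\int_\tau^{\tau+\vartheta}\brak{\mu^N_s,\mathcal{U}_s\cdot\nabla\phi}^2\,ds$, then bound the integrand by $\|\nabla\phi\|_\infty^2 K_\sigma^2 K_D^2 (\sum_h|\xi^h_s|)^2$ and take expectations; since the Ornstein–Uhlenbeck processes $\xi^h$ are stationary Gaussian with finite variance, $\sup_{s\le T}\mathbb{E}[(\sum_h|\xi^h_s|)^2]<\infty$, giving $\le C\vartheta^2$. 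The coagulation term is handled the same way: $|J^\phi|\le 4\|\phi\|_\infty$ and the inner pairing $\brak{\mu^N_s,T^\delta J^\phi}\le \tfrac12\delta^{-3}\|k\|_\infty\cdot 4\|\phi\|_\infty$ because $\mu^N_s$ is a probability measure (this is where the dependence of $C$ on $\delta$ enters), so after the outer pairing the integrand is deterministically bounded and the term is $\le C\vartheta^2$.

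Next I would handle the martingale part. By the optional sampling theorem and the expression for the predictable quadratic variation,
\[
\mathbb{E}\big[(\tilde M_{\tau+\vartheta}-\tilde M_\tau)^2\big]
= \mathbb{E}\Big[\int_\tau^{\tau+\vartheta}\!\big(N^{-1}\brak{\mu^N_s,|\nabla\phi|^2}
+ N^{-1}\brak{\mu^N_s,\brak{\mu^N_s,T^\delta (J^\phi)^2}}\big)\,ds\Big].
\]
Using again that $\mu^N_s$ is a probability measure, $\|\nabla\phi\|_\infty$ bounded, $|J^\phi|^2\le 16\|\phi\|_\infty^2$ and $T^\delta\le\tfrac12\delta^{-3}\|k\|_\infty$, the integrand is bounded by $N^{-1}C(\phi,\delta,k)\le C(\phi,\delta,k)$, so this term is $\le C\vartheta$ as well (in fact $\le C\vartheta/N$, which is even better). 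Collecting the four estimates and using $\vartheta^2\le T\vartheta$ on $[0,T]$ yields the claim with a constant depending only on $T,\phi,\xi,K_\sigma$ (through $K_D$ and the OU variances) and $\delta$.

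The only genuine subtlety — the main obstacle — is the martingale step: one must make sure the optional-sampling identity for $\mathbb{E}[(\tilde M_{\tau+\vartheta}-\tilde M_\tau)^2]$ in terms of the conditional expectation of the bracket increment is legitimate for the bounded stopping time $\tau$, i.e. that $\tilde M$ is a genuine (square-integrable) martingale and not merely a local martingale. This follows because, as just noted, $\brak{\tilde M}_T$ has deterministically bounded integrand (again crucially exploiting $\delta>0$ fixed and $\mu^N_s\in\mathcal{M}_1$), so $\tilde M$ is a true $L^2$-martingale and the usual stopping argument applies. Everything else is the "very tedious but not difficult" bookkeeping already acknowledged in the text, and the whole estimate is uniform in $N$ precisely because every bound above is either $N$-independent or improves with $N$.
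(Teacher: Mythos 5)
Your proposal is correct and follows essentially the same route as the paper: the same decomposition of the increment into the time integral of the generator plus the martingale increment, with the drift bounded using that $\mu^N_s$ is a probability measure together with the bounds on $\sigma_h$, $k$, $\delta^{-3}$ and the OU moments, and the martingale part controlled through the explicit brackets $\brak{M_D},\brak{M_C}$ giving the $C\vartheta/N$ bound. The only cosmetic differences are that you bound the three drift contributions separately (the paper bounds $|\mathcal{L}\brak{\mu^N_s,\phi}|$ in one stroke) and that you justify the optional-sampling step for $\tilde M$ explicitly, which the paper leaves implicit.
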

The proof of the latter shall be divided in two parts, corresponding to the generator part and the stochastic (martingale) part of the dynamics of $\mu^N$. Indeed, we can always expand the above increments as
\begin{align*}
    \brak{\mu^N_{\tau+\vartheta}-\mu^N_\tau,\phi}=&\int_\tau^{\tau+\vartheta}\mathcal{L}_s\brak{\mu^N_s,\phi}ds+\left(\tilde{M}_{\tau+\vartheta}^{\phi,N}-\tilde{M}^{N,\phi}_\tau\right).
\end{align*}
where $\tilde{M}$ is a martingale and we know the quadratic variation. Thus we can reduce ourselves to bound separately the first and second summands
in the right-hand side of the expression above.
\begin{lemma}
In the notation above, it holds
$$
\mathbb{E}\left[\big|\int_\tau^{\tau+\vartheta}\mathcal{L}\brak{\mu_t^N,\phi}dt\big|^2\right]\leq C\vartheta
$$
where the constant $C> 0$ only depends on $\xi,\phi,T,K_\sigma$.
\end{lemma}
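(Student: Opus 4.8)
The plan is to control the integrand $\mathcal{L}\brak{\mu_t^N,\phi}$ pointwise in $t\in[0,T]$ and uniformly in $N$, and then to conclude by Cauchy--Schwarz in time. Recall the splitting
\begin{align*}
\mathcal{L}\brak{\mu_t^N,\phi}=\varepsilon\brak{\mu_t^N,\Delta_x\phi}+\brak{\mu_t^N,\mathcal{U}_t\cdot\nabla_x\phi}+\brak{\mu_t^N,\brak{\mu_t^N,T^\delta J^\phi}},
\end{align*}
so that it is enough to bound each of the three terms. The diffusion term is immediate: since $\mu_t^N\in\mathcal{M}_1(\R^d\times\R_+)$ is a probability measure, $|\brak{\mu_t^N,\Delta_x\phi}|\le\|\Delta_x\phi\|_\infty$. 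For the coagulation term one observes that for every $(x,v,y,w)$ exactly one of the two indicators in $J^\phi$ is active, whence $|J^\phi(x,v,y,w)|\le 3\|\phi\|_\infty$, while $|T^\delta(x,v,y,w)|\le\tfrac12\delta^{-3}\|k\|_\infty$ because $k\in C^0_b$; here the convention $\phi(x,0)=0$ ensures that the absorbed (zero-volume) particles do not contribute. Using once more that $\mu_t^N$ has unit mass gives $|\brak{\mu_t^N,\brak{\mu_t^N,T^\delta J^\phi}}|\le\tfrac32\delta^{-3}\|k\|_\infty\|\phi\|_\infty$. Hence these two terms are dominated by a deterministic constant $C_1=C_1(\varepsilon,\phi,\delta,k)$, uniformly in $t\le T$ and $N\in\N$.

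The only genuinely random term is the transport one. Using the structural assumption $\mathcal{U}_t(x,v,\omega)=\sum_{h\in H}\sigma_h(x)D_h(v)\xi_t^h(\omega)$ with $\|\sigma_h\|_\infty\le K_\sigma$, $\|D_h\|_\infty\le K_D$ and $|H|<\infty$, and again that $\mu_t^N$ is a probability measure,
\begin{align*}
\big|\brak{\mu_t^N,\mathcal{U}_t\cdot\nabla_x\phi}\big|\le\|\nabla_x\phi\|_\infty K_\sigma K_D\sum_{h\in H}|\xi_t^h|\le C_2\,\Xi,\qquad \Xi:=\sup_{t\le T}\ \max_{h\in H}|\xi_t^h|,
\end{align*}
with $C_2=C_2(\phi,K_\sigma,K_D,|H|)$. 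Therefore $|\mathcal{L}\brak{\mu_t^N,\phi}|\le C_1+C_2\Xi$ for every $t\le T$ and every $N$. Interpreting the increment over $[\tau,(\tau+\vartheta)\wedge T]$ (of length at most $\vartheta$ and at most $T$) and applying Cauchy--Schwarz in time,
\begin{align*}
\Big|\int_\tau^{\tau+\vartheta}\mathcal{L}\brak{\mu_t^N,\phi}\,dt\Big|^2\le\vartheta\int_\tau^{(\tau+\vartheta)\wedge T}|\mathcal{L}\brak{\mu_t^N,\phi}|^2\,dt\le\vartheta\,T\,(C_1+C_2\Xi)^2 .
\end{align*}

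It then remains to take expectations and to check $\mathbb{E}[\Xi^2]<\infty$ (uniform in $N$ trivially, since $\Xi$ does not depend on $N$). This is the one spot that needs a little care: each $\xi^h$ is a stationary Ornstein--Uhlenbeck process with the explicit representation $\xi^h_t=\lambda\int_0^t e^{-\lambda(t-s)}dW^h_s$, so $\mathbb{E}\big[\sup_{t\le T}|\xi^h_t|^2\big]<\infty$ — for instance by a Doob/Burkholder argument applied to the underlying exponential martingale, or directly from the Graversen--Peskir maximal inequality recalled in the earlier estimate — and summing the finitely many $h\in H$ gives $\mathbb{E}[\Xi^2]<\infty$ with a bound involving only $T$ and $\lambda$. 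Plugging this in,
\begin{align*}
\mathbb{E}\left[\Big|\int_\tau^{\tau+\vartheta}\mathcal{L}\brak{\mu_t^N,\phi}\,dt\Big|^2\right]\le T\,\big(2C_1^2+2C_2^2\,\mathbb{E}[\Xi^2]\big)\,\vartheta=:C\vartheta,
\end{align*}
where $C$ depends only on $T,\phi,\xi$ (through $\lambda$) and $K_\sigma$ — and on the auxiliary fixed quantities $\varepsilon,\delta,k,K_D,|H|$, consistently with the $\delta$-dependence already recorded in the preceding Proposition — but not on $N$, $\tau$ or $\vartheta$, as claimed. The main obstacle is thus not any single estimate, each of which reduces to boundedness of $\phi$ and its derivatives, of $k$, and to $\mu_t^N$ having unit mass; it is rather the bookkeeping that keeps the final constant free of $N$, which ultimately rests on the $N$-independent moment bound for the supremum of the driving Ornstein--Uhlenbeck noise.
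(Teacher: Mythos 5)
Your argument is correct and follows essentially the same route as the paper: a pointwise bound on $|\mathcal{L}\brak{\mu_t^N,\phi}|$ term by term (diffusion, transport, coagulation) followed by Jensen/Cauchy--Schwarz in time. You are in fact a bit more careful than the paper's own proof, which simply absorbs $\sup_{t\le T}|\xi_t|$ into the constant, whereas you make explicit that one must take $\mathbb{E}[\Xi^2]<\infty$ for the Ornstein--Uhlenbeck supremum and that the constant also carries the $\delta,k,\varepsilon$ dependence already recorded in the surrounding Proposition.
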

\begin{proof}
Consider the integrating function $\mathcal{L}\brak{\mu_s^N,\phi}$ we can explicitly bound
\begin{align*}
    |\mathcal{L}\brak{\mu_s^N,\phi}|&\leq |\mathcal{L}_D\brak{\mu_s^N,\phi}|+|\mathcal{L}_C\brak{\mu_s^N,\phi}|\leq|\brak{\mu_s^N,\frac{\varepsilon^2}{2}\Delta\phi}|+|\brak{\mu_s^N,\xi_s\sigma\cdot\nabla_x\phi}|+\\
    &+|\brak{\mu_s^N,\brak{\mu_s^N,T^\delta J^\phi}}|\\
    &\leq \frac{\varepsilon^2}{2}\|\phi\|_{1,2}+K_\sigma \sup_{t\in[0,T]}|\xi_t|\|\phi\|_{1,2}+2\delta^{-3}\|k\|_{\infty}\|\phi\|_{\infty}.
\end{align*}
Passing to the square, using Jensen inequality we get
$$
\mathbb{E}\left[\big|\int_\tau^{\tau+\vartheta}\mathcal{L}\brak{\mu_t^N,\phi}dt\big|^2\right]\leq C^2_{\xi,\phi,\sigma,k,\delta}\ \vartheta.
$$
\end{proof}
\begin{lemma}
In the notation above, it holds
$$
\mathbb{E}\left[\left(\tilde{M}^N_{\tau+\vartheta}-\tilde{M}^N_\tau\right)^2\right]\leq \frac{C\vartheta}{N},
$$
where the constant $C>0$ only depends on $\phi,T,\delta$.
\end{lemma}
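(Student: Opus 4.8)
The plan is to pass from the increment of the martingale $\tilde M^N$ to its predictable quadratic variation, which has already been computed above, and then to bound the resulting integrand by a deterministic constant divided by $N$.

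First I would record that $\tilde M^N$ is a square-integrable martingale on the relevant time interval: indeed $|\brak{\mu_t^N,\phi}|\le\|\phi\|_\infty$ deterministically and the finite-variation part $\int_0^t\mathcal L\brak{\mu_s^N,\phi}\,ds$ lies in $L^2$ by the previous lemma, so $(\tilde M^N_t)^2-\brak{\tilde M^N}_t$ is a genuine martingale, with
\[
\brak{\tilde M^N}_t=\brak{M_D}_t+\brak{M_C}_t=\int_0^t\Big(N^{-1}\brak{\mu_s^N,|\nabla\phi|^2}+N^{-1}\brak{\mu_s^N,\brak{\mu_s^N,T^\delta(J^\phi)^2}}\Big)\,ds,
\]
the two integrands being exactly the expressions displayed above for $\brak{M_D}$ and $\brak{M_C}$ (now read without the outer expectation). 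I would also note that the transport term $\mathcal U_s\cdot\nabla_x$ is of first order and therefore contributes nothing to the carré du champ; this is why, unlike in the generator bound, the constant here involves neither $\xi$ nor $K_\sigma$. Since $\tau$ and $\tau+\vartheta$ are bounded stopping times for the filtration of $\mu^N$, optional sampling gives $\mathbb{E}[\tilde M^N_{\tau+\vartheta}\tilde M^N_\tau]=\mathbb{E}[(\tilde M^N_\tau)^2]$, hence
\[
\mathbb{E}\big[(\tilde M^N_{\tau+\vartheta}-\tilde M^N_\tau)^2\big]=\mathbb{E}\big[(\tilde M^N_{\tau+\vartheta})^2-(\tilde M^N_\tau)^2\big]=\mathbb{E}\Big[\int_\tau^{\tau+\vartheta}\big(N^{-1}\brak{\mu_s^N,|\nabla\phi|^2}+N^{-1}\brak{\mu_s^N,\brak{\mu_s^N,T^\delta(J^\phi)^2}}\big)\,ds\Big].
\]

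Next I would bound the integrand pointwise in $(s,\omega)$. Since $\mu_s^N$ is a probability measure and $\phi\in C^{1,2}_b$, one has $\brak{\mu_s^N,|\nabla\phi|^2}\le\|\nabla\phi\|_\infty^2\le\|\phi\|_{1,2}^2$. For the coalescence term, each of the four terms composing $J^\phi$ is bounded by $\|\phi\|_\infty$, so $(J^\phi)^2\le 9\|\phi\|_\infty^2$; since $0\le\mathbb{I}_{B_x(\delta f(v,w))}(y)\le 1$ and $k\in C^0_b$, integrating first in $(y,w)$ and then in $(x,v)$ against the probability measure $\mu_s^N$ gives $\brak{\mu_s^N,\brak{\mu_s^N,T^\delta(J^\phi)^2}}\le\tfrac12\delta^{-3}\|k\|_\infty\,9\|\phi\|_\infty^2$. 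Both bounds are constants depending only on $\phi$ and $\delta$ (with $\|k\|_\infty$ a fixed system constant), so the integrand is $\le C/N$ with $C=C(\phi,\delta)$, and integrating over $[\tau,\tau+\vartheta]$ and taking expectations yields $\mathbb{E}[(\tilde M^N_{\tau+\vartheta}-\tilde M^N_\tau)^2]\le C\vartheta/N$, as claimed.

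I do not expect a genuine obstacle here. The only points deserving a little care are: (i) the integrability needed to justify optional sampling, which is immediate from $|\brak{\mu^N,\phi}|\le\|\phi\|_\infty$ and the previous lemma (failing that, one localizes by a sequence of stopping times and passes to the limit by monotone convergence); and (ii) the observation that the singular prefactor $\delta^{-3}$ in $T^\delta$ is harmless precisely because $\delta$ is a fixed parameter, whereas the interaction indicator $\mathbb{I}_{B_x(\delta f(v,w))}(y)$ contributes only a factor $\le 1$ after integration — no smallness is extracted from the shrinking ball, that being exactly the local estimate postponed to the next section. The decisive factor $1/N$ is already built into the mean-field scaling $T_N=N^{-1}T^\delta$ together with the $N^{-2}$ produced by squaring the size-$N^{-1}$ jumps of $\brak{\mu^N,\phi}$.
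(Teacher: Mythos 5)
Your proposal is correct and follows essentially the same route as the paper: both reduce the squared increment, via optional sampling, to the expected integral of the predictable quadratic variation (the carr\'e du champ), split it into the diffusive part $\brak{M_D}$ and the coalescence part $\brak{M_C}$, and bound each pointwise by a constant in $\phi$, $\delta^{-3}$, $\|k\|_\infty$ times $N^{-1}$. Your version is merely a bit more explicit about the optional-sampling justification, the vanishing of the transport term in the carr\'e du champ, and the numerical constants (e.g.\ the factor $9\|\phi\|_\infty^2$), which only sharpens the paper's bookkeeping.
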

\begin{proof}
Using the fact that $\tau$ is a stopping time for $\mu^N$ and $\tilde{M}$ is a martingale, follows
\begin{align*}
\mathbb{E}\left[\left(\tilde{M}^N_{\tau+\vartheta}-\tilde{M}^N_\tau\right)^2\right]&\leq\mathbb{E}\left[\big|\int_\tau^{\tau+\vartheta}\mathcal{L}\brak{\mu_t^N,\phi}^2-2\brak{\mu_t^N,\phi}\mathcal{L}\brak{\mu_t^N,\phi}dt\big|\right]\\
&\leq \vartheta\mathbb{E}\left[\sup_t\left( \mathcal{L}\brak{\mu_t^N,\phi}^2-2\brak{\mu_t^N,\phi}\mathcal{L}\brak{\mu_t^N,\phi}\right)\right].
\end{align*}
From the definition of the two martingale (the diffusive one and coagulation one) we have:
\begin{align*}
& \brak{M_D}:=\mathbb{E}\left[ N^{-1}\brak{\mu_t^N,|\nabla\phi|^2}\right]\leq N^{-1} \|\phi\|_{1,2}\\
& \brak{M_C}:=\mathbb{E}\left[\frac{1}{N^3}\sum_{i,j}T_t^{\delta}(i,j)J_t^{\phi}(i,j)^2\right]\leq N^{-1}2\|\phi\|_{\infty}\delta^{-3}\|k\|_{\infty}.
\end{align*}
Which concludes the proof.
\end{proof}
The combination of Lemma 2.4 and Lemma 2.5 proves Proposition 2.3, from
which we immediately deduce the following tightness result
\begin{proposition}
Under the hypothesis of the previous lemmas, $\{Q^N\}_{N\in\N}$ the law of $\{\mu^N_\cdot\}_{N\in\N}$ on $\mathcal{D}([0,T],\mathcal{M}_1(\R^d\times\R_+))$ is relatively compact.\\
As a consequence the law of $(\mu^N,\xi)\in \mathcal{D}([0,T],\mathcal{M}_1(\R^d\times\R_+))\times C([0,T],\R)$ is tight and there exist a sub sequence such that $(\mu^N_k,\xi)\rightarrow_{\mathcal{L}}Q$ a law in $\mathcal{D}([0,T],\mathcal{M}_1(\R^d\times\R_+))\times C([0,T],\R)$.
\end{proposition}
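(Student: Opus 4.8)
The plan is to prove tightness of $\{\mu^N\}$ on $\mathcal{D}([0,T],\mathcal{M}_1(\R^d\times\R_+))$ through Jakubowski's criterion, regarding $\mathcal{M}_1(\R^d\times\R_+)$ as a Polish space under weak convergence of measures. This splits the task into two parts: a \emph{compact containment} condition for the paths $\mu^N_\cdot$, and tightness in $\mathcal{D}([0,T],\R)$ of the real projections $t\mapsto\brak{\mu^N_t,\phi}$ for $\phi$ ranging over a countable family of functionals that separates points of $\mathcal{M}_1(\R^d\times\R_+)$ and is closed under addition — for instance $\mu\mapsto\brak{\mu,\phi_m}$ with $\{\phi_m\}\subset C_{test}$ a separating sequence, which exists since $\R^d\times\R_+$ is Polish.

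For the projections I would invoke Aldous's criterion. Uniform control of the one-dimensional marginals is immediate, since $|\brak{\mu^N_t,\phi}|\le\|\phi\|_\infty$ for $\phi\in C_{test}$. The increment condition over stopping times is exactly Proposition 2.3: for any bounded stopping time $\tau\le T$ of the filtration of $\mu^N$ and any $\vartheta>0$, $\expt{|\brak{\mu^N_{\tau+\vartheta}-\mu^N_\tau,\phi}|^2}\le C\vartheta$ with $C$ independent of $N$ (obtained by splitting into the generator and martingale parts, Lemma 2.4 and Lemma 2.5). Markov's inequality then yields the Aldous estimate $\sup_N\prob{|\brak{\mu^N_{\tau+\vartheta}-\mu^N_\tau,\phi}|>\rho}\le C\vartheta/\rho^2$, uniform in $\tau$ and vanishing as $\vartheta\to0$, so $\{\brak{\mu^N_\cdot,\phi}\}_N$ is tight in $\mathcal{D}([0,T],\R)$ for every such $\phi$.

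The compact containment is the substantive step, and rests on a uniform first-moment bound $\sup_N\expt{\sup_{t\le T}\brak{\mu^N_t,|x|+|v|}}=:C_0<\infty$. The volume contribution is free: coalescence conserves total volume and the $X$-dynamics leave $v$ unchanged, so $\brak{\mu^N_t,v}=\brak{\mu^N_0,v}$ for all $t$ (this also gives the last assertion of Theorem 2.2). For the position contribution I would integrate the equation for $X^{i,N}_t$, bound the transport drift by $K_\sigma$ times the bound on the $D_h$ times $\sum_h\sup_{t\le T}|\xi^{h,\lambda}_t|$, apply the Graversen--Peskir maximal inequality to control $\expt{\sup_{t\le T}|\xi^{h,\lambda}_t|}$ and the Burkholder--Davis--Gundy inequality for the Brownian term, and close by Gronwall (assumption $(a)$ handles the initial data). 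Given $C_0$, Markov applied to $\sup_{t\le T}\brak{\mu^N_t,|x|+|v|}$ shows that, off an event of probability $\le\eta$, the path $\mu^N_\cdot$ stays for all $t\le T$ inside $\mathcal{K}_\eta:=\{\nu\in\mathcal{M}_1(\R^d\times\R_+):\ \nu(\{|x|+|v|>R\})\le \tfrac{C_0}{\eta R}\ \forall R>0\}$, and $\mathcal{K}_\eta$ is a compact subset of $\mathcal{M}_1(\R^d\times\R_+)$ by Prokhorov's theorem, since its members have uniformly vanishing tails and $\{|x|+|v|\le R\}$ is compact in $\R^d\times\R_+$; this is the required compact containment.

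Feeding the two conditions into Jakubowski's theorem yields relative compactness of $\{Q^N\}$ on $\mathcal{D}([0,T],\mathcal{M}_1(\R^d\times\R_+))$. Since $\xi$ is the same process for every $N$, its law is a single, hence tight, measure on $C([0,T],\R)$, so the joint laws of $(\mu^N,\xi)$ are tight on the product space and Prokhorov extracts a subsequence with $(\mu^{N_k},\xi)\to_{\mathcal{L}}Q$; moreover, since the jumps of $\mu^N$ have size $O(1/N)$ in the bounded-Lipschitz metric, the $\vartheta$-increment bound forces every limit point to concentrate on $C([0,T],\mathcal{M}_1(\R^d\times\R_+))$, consistent with the continuity in Theorem 2.2. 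The main obstacle is the uniform moment estimate behind compact containment: the transport field $\mathcal{U}$ must be controlled uniformly in $N$, which is the genuinely model-dependent ingredient (needing the Ornstein--Uhlenbeck maximal inequality and a Gronwall closure); once it and Proposition 2.3 are available, the remainder is the standard Aldous--Jakubowski route.
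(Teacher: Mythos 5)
Your proposal is correct, but it takes a genuinely different route from the paper on the compactness-in-space side. The paper does not perform any compact containment step: it invokes a Roelly-type projection criterion, reducing tightness of $\mu^N_\cdot$ in $\mathcal{D}([0,T],\mathcal{M}_1(\R^d\times\R_+))$ to tightness of the real-valued processes $\brak{\mu^N_\cdot,\phi}$ for $\phi$ in a dense subset of $C^0$ including $\phi\equiv 1$; the pointwise-in-$t$ tightness is then automatic from the mass bound $|\brak{\mu^N_t,\phi}|\le\|\phi\|_\infty$ (total mass at most $1$), and the Aldous increment condition follows from Proposition 2.3 plus Markov — exactly as in your second paragraph, which coincides with the paper's argument. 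Where you diverge is in insisting on Jakubowski's criterion with genuine compact containment in the weak topology, which forces you to prove the uniform moment bound $\sup_N\expt{\sup_{t\le T}\brak{\mu^N_t,|x|+|v|}}<\infty$ via volume conservation, the Graversen--Peskir maximal inequality for the Ornstein--Uhlenbeck processes, BDG and Gronwall; amusingly, an estimate of precisely this kind appears in the paper's source only as commented-out material and is not used in the final proof. The trade-off: the paper's route is shorter and needs no moment control, but it implicitly works with sub-probability measures and a vague-type compactness (hence the need to include the constant function among the test functions, and a priori tolerance of mass escaping to spatial infinity in the limit), whereas your route costs the extra a priori estimate but yields honest compact containment for probability-mass-controlled measures, ruling out loss of spatial mass at the tightness stage and making the topology bookkeeping cleaner. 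Both arguments share the same core quantitative input (Proposition 2.3 via Lemmas 2.4--2.5), so your proposal is a valid, somewhat more laborious and more robust alternative; your closing remark that limit points are supported on continuous paths is not needed for this proposition but is consistent with Theorem 2.1.
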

\begin{proof}

it suffices to show that the laws of couplings$\brak{\mu_t^N,\phi}$ is tight on $\mathcal{D}([0,T],\R)$ for any fixed $\phi$ in a dense subset of $C^0$
including the constant $\phi \equiv 1$. Such tightness is in turn verified if:
\begin{itemize}
    \item for any $t\in[0,T]$ the sequence of random variables $\brak{\mu_t^N,\phi}$ is tight;
    \item for any $\overline{\delta} > 0$ it holds
    $$
    \lim_{\varepsilon\rightarrow0}\limsup_{N\in\N}\sup_{\tau\in \mathcal{T}_T^N}\sup_{\vartheta\leq \varepsilon} Q^N(|\brak{\mu^N_{\tau+\vartheta}-\mu^N_\tau,\phi}|>\delta)=0
    $$
    where $\mathcal{T}_T^N$ denotes the family of stopping times with respect to the filtration
of $\mu_t^N$ bounded by $T$, that is $\tau\leq T$ almost surely;
\end{itemize}
The former condition is an easy consequence since $|\mu_t^N|\leq 1$, the collection $(\brak{\phi,\mu_t^N})_{N\in\N}$ for fixed $t$ is automatically tight because $\brak{\phi,\mu_t^N}\leq \|\phi\|_\infty$. The second condition, follows immediately from  the previous proposition and Markov inequality.
\end{proof}
\subsection{Passing to the Limit}
As a consequence of proposition $(2.6)$ the law of $(\mu^N,\xi)\in C([0,T],Pr_1(\R^d\times\R_+))\times C([0,T],\R)$ is tight and we can extract a subsequence, whcich we still denote $Q^N$, such that converges weakly to a law $Q$ in $C([0,T],Pr_1(\R^d\times\R_+))\times C([0,T],\R)$.\\
Let us now consider $(\mu_t,\xi)$, a process such that its law is equal to $Q$ and such that
$$
(\mu_t^{N_k},\xi)\rightarrow_{\mathcal{L}}(\mu_t,\xi).
$$

\begin{theorem}
If $(\mu,\xi)$ realise the law of a weak limit point of a subsequence of $\{Q^N\}_{N\in\N}$, then $\mu$ solves the RPDE with probability 1 over test function.
\end{theorem}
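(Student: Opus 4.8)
\emph{Set-up.} The plan is to pass to the limit $N_k\to\infty$ in the prelimit identity for $\langle\mu_t^N,\phi\rangle$ established above, using Proposition~2.6 and Lemma~2.5. By Skorokhod's representation theorem I would realise the weak convergence on a common probability space, so that $(\mu^{N_k},\xi)\to(\mu,\xi)$ almost surely in $\mathcal{D}([0,T],\M_1(\R^d\times\R_+))\times C([0,T],\R)$; since one coalescence perturbs $\mu^N$ by $O(1/N)$ in a bounded-Lipschitz metric on the (asymptotically full-probability) event that the configuration stays in a fixed ball, $\sup_{t\le T}d(\mu_t^N,\mu_{t^-}^N)\to0$, so every limit point is path-continuous, the convergence is uniform on $[0,T]$, and $\langle\mu_s^{N_k},g\rangle\to\langle\mu_s,g\rangle$ for all $s\in[0,T]$ and $g\in C_b$. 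Setting, for fixed $\phi\in C_{test}$ and $t$,
\begin{align*}
\Phi_t(\nu,\zeta):={}&\langle\nu_t,\phi\rangle-\langle\nu_0,\phi\rangle-\int_0^t\langle\nu_s,\tfrac{\varepsilon^2}{2}\Delta_x\phi\rangle\,ds\\
&-\int_0^t\sum_{h\in H}\zeta_s^h\langle\nu_s,\sigma_h D_h\cdot\nabla_x\phi\rangle\,ds-\int_0^t\langle\nu_s\otimes\nu_s,T^\delta J^\phi\rangle\,ds ,
\end{align*}
the identity reads $\Phi_t(\mu^N,\xi)=\tilde M_t^{N,\phi}$, and Lemma~2.5 gives $\mathbb{E}[(\tilde M_t^{N,\phi})^2]\le Ct/N\to0$; hence it suffices to show $\Phi_t(\mu^{N_k},\xi)\to\Phi_t(\mu,\xi)$ in probability.

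\emph{Linear terms.} The initial term converges by assumption~(b). In the diffusive and transport terms, $\Delta_x\phi$ and $\sigma_h D_h\cdot\nabla_x\phi$ are bounded and continuous, so the integrands converge for every $s$ and are dominated by $C(1+\sum_h\sup_{s\le T}|\xi^h_s|)\in L^1([0,T])$; dominated convergence then yields convergence of the time integrals. (Here $\xi$ is common to the whole sequence, so the joint convergence is used only to keep it fixed.)

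\emph{Nonlinear term --- the main obstacle.} The functional $\nu\mapsto\langle\nu\otimes\nu,T^\delta J^\phi\rangle$ is \emph{not} weakly continuous: $T^\delta J^\phi$ is bounded but jumps across the collision sphere $\Sigma:=\{(x,v,y,w):|x-y|=\delta f(v,w)\}$ (from $\mathbb{I}_{B_x(\delta f(v,w))}(y)$) and across the diagonal $\{v=w\}$ (where $\mathbb{I}_{v\ge w}$ and $\mathbb{I}_{v<w}$ switch). I would sandwich: choose continuous kernels $T^{\delta,\pm}_\eta$ with $T^{\delta,-}_\eta\le T^\delta J^\phi\le T^{\delta,+}_\eta$ and $T^{\delta,+}_\eta-T^{\delta,-}_\eta\to0$ pointwise off $\Sigma\cup\{v=w\}$ as $\eta\to0$, built by replacing the ball indicator with a Lipschitz cutoff equal to $1$ on $B_x((\delta f(v,w)-\eta)^+)$ and $0$ off $B_x(\delta f(v,w)+\eta)$, and smoothing similarly in $v-w$. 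For each fixed $\eta$, $\nu\mapsto\langle\nu\otimes\nu,T^{\delta,\pm}_\eta\rangle$ is bounded and weakly continuous, so the associated time integrals pass to the limit as in the linear step; letting $\eta\to0$ afterwards recovers $\int_0^t\langle\mu_s\otimes\mu_s,T^\delta J^\phi\rangle\,ds$ \emph{provided} $(\mu_s\otimes\mu_s)(\Sigma\cup\{v=w\})=0$ for a.e.\ $s$, almost surely. This is the genuinely delicate point: since $\Sigma\cup\{v=w\}$ is Lebesgue-null in $(\R^d\times\R_+)^2$, it is enough that $\mu_s$ be absolutely continuous for a.e.\ $s$ almost surely, a regularity produced by the non-degenerate diffusion $\varepsilon\Delta_x$ together with the smoothing in the $v$-variable; alternatively one controls $\mathbb{E}[(\mu_s^N\otimes\mu_s^N)(\Sigma_\eta)]$ uniformly in $N$ (the law of $X_s^i-X_s^j$ has a density before coalescence, so only an $O(\eta)$-tube around $\Sigma$ carries mass) and transfers the bound to the limit by semicontinuity.

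\emph{Conclusion.} Combining the three steps, $\Phi_t(\mu,\xi)=0$ almost surely for each fixed $t$ and $\phi$. Both sides are continuous in $t$, so for each fixed $\phi$ the identity holds for all $t\in[0,T]$ on a single full-probability event; and $\Phi_t$ depends continuously on $\phi$ in the $C^{1,2}_b$-norm, so running over a countable $C^{1,2}_b$-dense family of test functions and a diagonal argument gives: almost surely, $\Phi_t(\mu,\xi)=0$ for every $t\in[0,T]$ and every $\phi\in C_{test}$, which is exactly the weak formulation of the RPDE. (The accompanying volume bound follows by testing, after truncating the $v$-variable, against $\phi(x,v)=v$, for which $J^\phi\le0$.)
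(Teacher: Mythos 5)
Your proposal is correct in outline but follows a genuinely different route from the paper. The paper does not use Skorokhod representation or a term-by-term passage to the limit: it packages the whole weak-form residual into the single functional $\Psi_\phi(\nu_\cdot,f_\cdot)$ (a supremum over $t\in[0,T]$), asserts that $\Psi_\phi$ is continuous on $\mathcal{D}([0,T],Pr_1(\R^d\times\R))\times C([0,T],\R)$, and then applies the Portmanteau theorem to the weakly convergent laws: $\mathbb{P}(\Psi_\phi(\mu,\xi)>\tilde\delta)\le\liminf_k\mathbb{P}(\Psi_\phi(\mu^{N_k},\xi)>\tilde\delta)$, where the right-hand side is rewritten, via the prelimit identity, as the probability that the initial-condition discrepancy plus the martingale term exceeds $\tilde\delta$, and is killed by assumption (b) and the $O(\vartheta/N)$ bound of Lemma 2.5. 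This buys the statement uniformly in $t$ in one stroke and avoids any representation theorem, whereas your argument concludes pointwise in $t$ and then upgrades by continuity; the exchange of quantifiers over a countable dense family of $\phi$, which you fold into your conclusion, is carried out in the paper only afterwards, in the proof of Theorem 2.1. The substantive difference is your third step: you correctly observe that $\nu\mapsto\langle\nu\otimes\nu,T^\delta J^\phi\rangle$ is \emph{not} weakly continuous because of the indicators $\mathbb{I}_{B_x(\delta f(v,w))}(y)$ and $\mathbb{I}_{v\ge w}$, and you propose a sandwich by mollified kernels together with the requirement that $(\mu_s\otimes\mu_s)$ charge neither the collision sphere nor the diagonal. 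The paper's proof silently subsumes this difficulty in the unproved claim that $\Psi_\phi$ is continuous, so on this point your treatment is more honest; but note that your remedy is itself only sketched (a.e. absolute continuity of $\mu_s$, or a uniform tube estimate for $\mu_s^N\otimes\mu_s^N$ near $\Sigma$, is asserted rather than proved), so if you pursue your route you must actually establish one of those two regularity statements — that is the one step of your argument that is not yet a proof, and it is exactly the step the paper's continuity claim would also need.
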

\begin{proof} 
We consider the functional
\begin{align*}
\Psi_{\phi}(\nu_\cdot,f_\cdot):=&\sup_{t\in [0,T]}\big|\brak{\nu_t,\phi}-\brak{\nu_0,\phi}-\int_0^t\ \brak{\nu_s,\frac{\varepsilon_x^2}{2}\Delta_x\phi}ds\\
   &-\int_0^t\ \brak{\nu_s,\sigma(x,v)\cdot\nabla_x\phi}f_s\ ds -\int_0^t\ \brak{\nu_s,\brak{\nu_s,T^\delta J^\phi}}ds\ \big|
\end{align*}
defined for every $\phi\in C^{1,2}_b(\R^d\times\R)$.\\
It is continuous on $\mathcal{D}([0.T],Pr_1(\R^d\times\R))\times C([0,T],\R)$.\\
Hence, if $(\mu_t^{N_k},\xi)\rightarrow_{\mathcal{L}}(\mu_t,\xi)$ is a subsequence which weakly converges to $(\mu,\xi)$, by Portmanteau theorem we have
\begin{align*}
    \mathbb{P}(\Psi_{\phi}(\mu_\cdot,\xi_\cdot)>\delta)\leq \liminf_{k\rightarrow\infty}\mathbb{P}\left(\Psi_{\phi}(\mu^{N_k}_\cdot,\xi_\cdot)>\tilde{\delta}\right).
\end{align*} 
Using the definition of the functional and the identity satisfied by $(\mu_t^N,\xi)$ in $lemma\ (3.3)$, the r.h.s. is equal to
$$
=\mathbb{P}\left(\sup_{t\in[0,T]}|\brak{\mu_0^{N_k},\phi}-\brak{\mu_0,\phi}+M_t^{\phi,N_k,x}+M_t^{\phi,N_k,v}|>\tilde{\delta}\right).
$$
Using the hypothesis on the initial condition, regularity of the velocity field and interaction kernel, standard computation for the martingale part gives us
$$
    \mathbb{P}(\Psi_{\phi}(\mu_\cdot,\xi_\cdot)>\tilde{\delta})\leq\liminf_{k\rightarrow\infty}\mathbb{P}\left(\Psi_{\phi}(\mu^{N_k}_\cdot,\xi_\cdot)>\tilde{\delta}\right)=0.
$$
Since this hold true for every $\tilde{\delta}>0$, we deduce that
$$
\mathbb{P}(\Psi_{\phi}(\mu_\cdot,\xi_\cdot)>\tilde{\delta})=1.
$$
Hence $\forall\phi\in C^{1,2}_b(\R^d\times\R)$:
\begin{align*}
   \brak{\mu_t,\phi}-\brak{\mu_0,\phi}=&\int_0^t\ \brak{\mu_s,\frac{\varepsilon_x^2}{2}\Delta_x\phi}ds \\
   &+\int_0^t\ \brak{\mu_s,\sigma(x,v)\cdot\nabla_x\phi}\xi_s ds +\int_0^t\ \brak{\nu_s,\brak{\nu_s,T^\delta J^\phi}}ds
\end{align*}
$\forall t\in [0,T]$, with probability 1. 
\end{proof}
We now can conclude the proof of Theorem (2.1)
\begin{proof}
What we need to prove to conclude the theorem is that we can swap the quantifiers on $\phi\in \mathcal{D}(R^{d+1})$.\\ Since this space is separable we can work on a infinite countable dense subset and we have
$$
\mathbb{P}(\forall\phi\ \Psi_\phi(\mu,\xi)=0)=\mathbb{P}(\forall\phi_n\ \Psi_{\phi_n}(\mu,\xi)=0)=1.
$$
And so we have that:
\begin{align*}
   \brak{\mu_t,\phi}-\brak{\mu_0,\phi}=&\int_0^t\ \brak{\mu_s,\frac{\varepsilon_x^2}{2}\Delta_x\phi}ds \\
   &+\int_0^t\ \brak{\mu_s,\sigma(x,v)\cdot\nabla_x\phi}\xi_s ds +\int_0^t\ \brak{\mu_s,\brak{\mu_s,T^\delta J^\phi}}ds
\end{align*}
for every $t\in[0,T]$ and $\phi\in C_{test}$, with probability 1.\\
The continuity of $\mu$ follows from the fact that every element in the r.h.s. is continuous. 
\end{proof}
What we have now is that any process realizing the limiting law is a solution of the random evolution equation in the sense above. What we need to conclude now is:
\begin{enumerate}
    \item Prove uniqueness of solutions for this equation.
    \item Conclude all limiting laws agree, and hence that we have convergence in law.
\end{enumerate} 
\begin{theorem}
If $(\mu,\xi),\ (\nu,\xi)$ are two processes such that $\mu,\nu$ satisfy the weak formulation as in $Thm.\ 8.3$, $\mathbb{P}-a.s.$, then:
$$\|\mu_t-\nu_t\|=0,\ \forall t\in[0,T],\ \mathbb{P}-a.s.$$
\end{theorem}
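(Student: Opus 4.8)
The approach will be pathwise and $L^\infty$-dual. Fix $\omega$ in the full-measure event on which $s\mapsto\xi_s$ is continuous and both $(\mu,\xi)$ and $(\nu,\xi)$ satisfy the weak formulation of Theorem~2.8, set $M:=\sup_{s\le T}\sum_{j\in J}\abs{\xi^j_s}<\infty$, and read $\norm{\mu_t-\nu_t}$ as the total variation norm on $\R^d\times(0,\infty)$, which equals $\sup\set{\brak{\mu_t-\nu_t,\phi}:\phi\in C_{test},\ \norm{\phi}_\infty\le 1}$ since $C_{test}$ is measure-determining there. The structural point is that, $\delta$ being \emph{frozen}, the coagulation kernel is bounded: $\abs{T^\delta}\le\tfrac12\delta^{-3}\norm{k}_\infty$ and $\abs{J^\psi}\le 4\norm{\psi}_\infty$ for $\psi\in C_b$; write $C_\delta:=2\delta^{-3}\norm{k}_\infty$ and $\Gamma_\psi(\rho):=\brak{\rho,\brak{\rho,T^\delta J^\psi}}$. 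The first ingredient is the backward evolution family $(P_{s,t})_{0\le s\le t\le T}$ generated by $\mathcal{A}_s:=\tfrac{\varepsilon_x^2}{2}\Delta_x+\sum_{j\in J}\xi^j_s\,\sigma_j(x,v)\cdot\nabla_x$, realised as $P_{s,t}\phi(x,v)=\expt{\phi(Y^{s,x,v}_t,v)}$ with $dY_r=\sum_{j}\xi^j_r\sigma_j(Y_r,v)\,dr+\varepsilon_x\,dB_r$. Since the $\sigma_j$ are smooth and bounded with all derivatives, $\varepsilon_x>0$, and the $\xi^j$ are continuous, this family is a sup-norm contraction on $C_b$, preserves $C_{test}$ (it moves only the $x$ variable, so $\phi(\cdot,0)\equiv0$ is preserved), satisfies $\sup_{0\le s\le t\le T}\norm{P_{s,t}\phi}_{C^{1,2}_b}\le C(T,M,\{\sigma_j\})\norm{\phi}_{C^{1,2}_b}$ with $s\mapsto P_{s,t}\phi$ continuous in $C^{1,2}_b$, and $s\mapsto P_{s,t}\phi$ is $C^1$ with $\partial_s P_{s,t}\phi=-\mathcal{A}_s P_{s,t}\phi$, $P_{t,t}\phi=\phi$.

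Next I would turn the weak formulation into a duality (mild) identity. Extending Theorem~2.8 to time-dependent test functions: for $(\psi_s)$ of class $C^1$ in $s$, valued in $C_{test}$ and uniformly bounded in $C^{1,2}_b$,
\begin{align*}
\brak{\mu_t,\psi_t}-\brak{\mu_0,\psi_0}=\int_0^t\brak{\mu_s,\partial_s\psi_s+\mathcal{A}_s\psi_s}\,ds+\int_0^t\Gamma_{\psi_s}(\mu_s)\,ds,
\end{align*}
which follows by telescoping $\brak{\mu_{t_{k+1}},\psi_{t_{k+1}}}-\brak{\mu_{t_k},\psi_{t_k}}$ over a partition, applying Theorem~2.8 on each $[t_k,t_{k+1}]$ with the frozen test function $\psi_{t_{k+1}}$, and letting the mesh go to $0$; the only inputs are the weak continuity of $s\mapsto\mu_s$ (already proved), the elementary bounds $\abs{\Gamma_{\psi-\psi'}(\mu_s)}\le C_\delta\norm{\psi-\psi'}_\infty$ and $\abs{\brak{\mu_s,g}}\le\norm{g}_\infty$ ($\mu_s$ a probability measure), and the $C^{1,2}_b$-continuity in $s$ from the first step. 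Choosing $\psi_s:=P_{s,t}\phi$ for fixed $t$ and $\phi\in C_{test}$ kills the first integrand, giving $\brak{\mu_t,\phi}=\brak{\mu_0,P_{0,t}\phi}+\int_0^t\Gamma_{P_{s,t}\phi}(\mu_s)\,ds$, and likewise for $\nu$; subtracting and using $\mu_0=\nu_0$,
\begin{align*}
\brak{\mu_t-\nu_t,\phi}=\int_0^t\big(\Gamma_{P_{s,t}\phi}(\mu_s)-\Gamma_{P_{s,t}\phi}(\nu_s)\big)\,ds.
\end{align*}

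Then comes the Gronwall loop. With $\psi:=P_{s,t}\phi$, so $\norm{\psi}_\infty\le\norm{\phi}_\infty$, I would split, by Fubini, $\Gamma_\psi(\mu_s)-\Gamma_\psi(\nu_s)=\brak{\mu_s-\nu_s,\brak{\mu_s,T^\delta J^\psi}}+\brak{\mu_s-\nu_s,\brak{\nu_s,T^\delta J^\psi}}$, the two inner pairings being over the complementary pair of variables; each inner pairing is a function bounded by $\tfrac12\delta^{-3}\norm{k}_\infty\cdot 4\norm{\phi}_\infty=C_\delta\norm{\phi}_\infty$ since $\mu_s,\nu_s$ are probability measures, so $\abs{\Gamma_\psi(\mu_s)-\Gamma_\psi(\nu_s)}\le 2C_\delta\norm{\phi}_\infty\norm{\mu_s-\nu_s}$. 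Hence $\abs{\brak{\mu_t-\nu_t,\phi}}\le 2C_\delta\norm{\phi}_\infty\int_0^t\norm{\mu_s-\nu_s}\,ds$; taking the supremum over $\phi\in C_{test}$ with $\norm{\phi}_\infty\le 1$ (bounded-pointwise dense, by dominated convergence, in the unit ball of $C_b$) yields
\begin{align*}
\norm{\mu_t-\nu_t}\le 2C_\delta\int_0^t\norm{\mu_s-\nu_s}\,ds,\qquad t\in[0,T].
\end{align*}
Since $s\mapsto\norm{\mu_s-\nu_s}$ is bounded by $2$ and measurable (lower semicontinuity of total variation), Gronwall's lemma forces it to vanish on $[0,T]$; as $\omega$ ranged over a full-measure set, $\norm{\mu_t-\nu_t}=0$ for every $t$, $\mathbb{P}$-a.s.

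The main obstacle is the first step: the \emph{uniform-in-$(s,t)$} $C^{1,2}_b$ bounds and the $C^{1,2}_b$-continuity in $s$ of $P_{s,t}\phi$, needed so that $\mathcal{A}_s\psi_s$ is a legitimate, uniformly bounded integrand in the duality identity. These follow either from differentiating the stochastic flow $Y$ twice (using $\sigma_j\in C^2_b$) or, exploiting $\varepsilon_x>0$, from interior parabolic/Schauder estimates for $\partial_s u+\mathcal{A}_s u=0$ with coefficients only continuous in $s$; stability of $C_{test}$ under $P_{s,t}$ is then immediate. Everything downstream — the time-dependent test-function extension and the Gronwall iteration — is routine, and, worth noting, no moment in $v$ is needed here: the finite-volume hypothesis on $\mu_0$ enters only in Theorem~2.1 and in the $\delta\to0$ passage of Proposition~3.1, not in this uniqueness statement, precisely because the frozen kernel $T^\delta$ is bounded.
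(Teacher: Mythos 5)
Your argument is sound and reaches the same conclusion, but it follows a genuinely different route from the paper. The paper works with the mild (Duhamel) formulation with respect to the constant-coefficient heat semigroup alone: it tests against $e^{(t-s)\Delta}\phi$, keeps the random transport term $\xi_s\,\sigma\cdot\nabla_x e^{(t-s)\Delta}\phi$ as a perturbation, pays the smoothing factor $\|\nabla e^{(t-s)\Delta}\|\lesssim (t-s)^{-1/2}$, and therefore ends with an integral inequality with kernel $(\sqrt{t-s}+1)/\sqrt{t-s}$, closed by a singular (``modified standard'') Gronwall lemma. You instead absorb the \emph{whole} linear part, Laplacian plus random transport, into a pathwise backward evolution family $P_{s,t}$ generated by $\mathcal{A}_s=\tfrac{\varepsilon_x^2}{2}\Delta_x+\sum_j\xi^j_s\sigma_j\cdot\nabla_x$, so both linear terms drop out of the difference identity, only the bounded (for frozen $\delta$) coagulation difference survives, and a plain Gronwall argument finishes. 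What each approach buys: the paper's propagator is explicit and needs no flow or Schauder machinery, but it requires $\varepsilon_x>0$, the singular-kernel Gronwall variant, and a sup-norm estimate of $\sigma\cdot\nabla e^{(t-s)\Delta}\phi$; your approach trades that for the uniform $C^{1,2}_b$ bounds and $s$-continuity of $P_{s,t}\phi$ (which you correctly identify as the main technical burden and which do follow from the assumed smoothness and boundedness of the $\sigma_h$, by differentiating the stochastic flow), in exchange for a nonsingular Gronwall, no essential use of $\varepsilon_x>0$, and a cleaner bilinear estimate $|\Gamma_\psi(\mu_s)-\Gamma_\psi(\nu_s)|\le 2C_\delta\|\phi\|_\infty\|\mu_s-\nu_s\|$. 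Note that both routes tacitly rely on the same two ingredients you make explicit: the extension of the weak formulation to time-dependent test functions (the paper uses it silently when testing against $e^{(t-s)\Delta}\phi$), and the identification of $\|\mu_t-\nu_t\|$ with a supremum over test functions of unit norm; your remark that no volume-moment hypothesis is needed here, because the frozen kernel $T^\delta$ is bounded, is also consistent with the paper's proof, which uses only $\|k\|_\infty$ and $\delta^{-3}$.
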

\begin{proof}
Existence in the weak sense has been proved above. Let us prove uniqueness. To
shorten the expressions we set $\varepsilon_x=\varepsilon_v=1$ and $|H|=1$.\\
Let $\mu$ and $\nu$ be two solutions. Using as test
functions the heat kernel multiplied by a test function we have that both satisfy
\begin{align*}
    \brak{\mu_t-\nu_t,\phi}=&\int_0^t\ \brak{\mu_s-\nu_s,\sigma(x,v)\cdot\nabla_x(e^{(t-s)\Delta}\phi)}\xi_s ds\\
    &+\int_0^t\ \brak{\mu_s,\brak{\mu_s,T^\delta J^{(e^{(t-s)\Delta}\phi)}}}ds\\
    &-\int_0^t\ \brak{\nu_s,\brak{\nu_s,T^\delta J^{(e^{(t-s)\Delta}\phi)}}}ds
\end{align*}
We pass to the module and we estimate for $\mathbb{P}-a.s.$
\begin{align*}
    |\brak{\mu_t-\nu_t,\phi}|\leq&\int_0^t\ |\brak{\mu_s-\nu_s,\sigma(x,v)\cdot\nabla_x(e^{(t-s)\Delta}\phi)}|ds\|\xi\|_{L_t^\infty}\\
    &+\int_0^t\ |\brak{\mu_s-\nu_s,\brak{\mu_s,T^\delta J^{(e^{(t-s)\Delta}\phi)}}}|ds\\
    &+\int_0^t\ |\brak{\nu_s,\brak{\mu_s-\nu_s,T^\delta J^{(e^{(t-s)\Delta}\phi)}}}|ds
\end{align*}
We the first term in the r.h.s. we have $\mathbb{P}-a.s.$
\begin{align*}
\int_0^t\ |\brak{\mu_s-\nu_s,\sigma(x,v)\cdot\nabla_x(e^{(t-s)\Delta}\phi)}|ds\|\xi\|_{L_t^\infty}&\leq \int_0^tK_{\sigma} \|\phi\|_\infty\|\nabla e^{(t-s)\Delta}\|\|\mu_s-\nu_s\|ds \|\xi\|_{L_t^\infty}\\
&\leq\int_0^t\frac{C(\xi, \phi,\sigma)}{\sqrt{t-s}}\|\mu_s-\nu_s\|ds.
\end{align*}
For the second term in the r.h.s we have
\begin{align*}
\int_0^t\ |\brak{\mu_s-\nu_s,\brak{\mu_s,T^\delta J^{(e^{(t-s)\Delta}\phi)}}}|ds&\leq \int_0^t\ \brak{\mu_s-\nu_s,2\delta^{-3}\|\phi\|_{\infty}\|K\|_{\infty}}ds
\end{align*}
where we have used the fact that
\begin{align*}
|T^\delta J^{e^{(t-s)\Delta}\phi}|&=|\int \frac{1}{2}\delta^{-3}\mathbb{I}_{B_x(\delta f(v,w))}k(v,w)4|e^{(t-s)\Delta}\phi|d\mu^N_s|\\
&\leq 2\delta^{-3}\int d\mu_s^N(dx,dv)\|\phi\|_\infty\|k\|_{\infty}\leq 2\delta^{-3}\|\phi\|_{\infty}\|K\|_{\infty}<\infty.
\end{align*}
putting all together we have
$$
\int_0^t\|\brak{\mu_s-\nu_s,\brak{\mu_s,T^\delta J^{(e^{(t-s)\Delta}\phi)}}}|ds\leq\int_0^t C(\delta,k,\phi)\|\mu_s-\nu_s\|ds.
$$
For the last term we have
\begin{align*}
\int_0^t\ |\brak{\nu_s,\brak{\mu_s-\nu_s,T^\delta J^{(e^{(t-s)\Delta}\phi)}}}|ds&\leq C(k,\delta,\phi)\int_0^t \|\mu_s-\nu_s\|ds
\end{align*}
If we combine all the inequality we get, over the sup on the test function with $\|\phi\|=1$
$$
\|\mu_t-\nu_t\|\leq C(\xi,k,\sigma,T,\delta)\int_0^t \frac{\sqrt{t-s}+1}{\sqrt{t-s}}\|\mu_s-\nu_s\|ds
$$
and with a modification
of a standard lemma we are able to conclude that\\
$\|\mu_t-\nu_t\|=0.$
\end{proof}
\begin{corollary}
The family $\{Q^N\}_{N\in\N}$
of laws of the processes $(\mu_\cdot^N,\xi_\cdot)$ weakly converge
to $Q$ where $\mu\in C([0,T],\mathcal{M}_1(\R^d\times\R_+))$.
\end{corollary}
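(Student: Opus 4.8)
The plan is to close the loop using the three facts already in place: the tightness of $\{Q^N\}$ (Proposition 2.6), the identification of every weak limit point as a solution of the limiting RPDE (Theorem 2.7), and the uniqueness of such solutions for a frozen realization of the driving noise (Theorem 2.8). Once every subsequential limit is shown to have one and the same law, the relative compactness coming from tightness upgrades automatically to weak convergence of the whole sequence.

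First I would apply Prohorov's theorem: by Proposition 2.6 the family $\{Q^N\}$ is tight on $\mathcal{D}([0,T],\mathcal{M}_1(\R^d\times\R_+))\times C([0,T],\R)$, hence relatively compact for weak convergence. Pick any weakly convergent subsequence, $Q^{N_k}\rightarrow_{\mathcal{L}}Q$, and realize $Q$ as the law of a pair $(\mu,\xi)$. Two properties of $Q$ should be recorded. First, $\mu$ in fact takes values in $C([0,T],\mathcal{M}_1(\R^d\times\R_+))$ and not merely in the Skorokhod space: the jumps of $\mu^N$ are caused by individual coalescence events, each displacing an atom of mass $N^{-1}$, so jump sizes are $O(N^{-1})\to 0$ and the standard criterion forces the limit to have continuous paths; equivalently, continuity is read off the weak formulation of Theorem 2.7, every term on its right-hand side being continuous in $t$. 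Second, by Theorem 2.7, $\mu$ solves the limiting random PDE with probability one, simultaneously for all $\phi\in C_{test}$ after the usual separability argument.

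Next I would use uniqueness to pin down the limit. Under assumption (b) together with the i.i.d.\ initial data, a law of large numbers gives that $\mu_0^N$ converges in probability to the \emph{deterministic} measure $\mu_0\in Pr_1(\R^d\times\R_+)$; moreover the Ornstein--Uhlenbeck process $\xi$ is literally the same object in each element of the sequence $(\mu^N,\xi)$. Hence the limiting equation is driven only by the fixed data $(\mu_0,\xi)$, and Theorem 2.8 asserts precisely that any two solutions sharing $\xi$ (and the same $\mu_0$) coincide $\mathbb{P}$-a.s.; thus the solution is a measurable functional of $(\mu_0,\xi)$ and its law is uniquely determined. Consequently any two weak limit points of $\{Q^N\}$ carry the same law $Q$, and a relatively compact sequence of probability measures whose subsequential limits all equal a fixed $Q$ converges weakly to $Q$. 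This gives $Q^N\rightarrow_{\mathcal{L}}Q$, with first marginal $\mu\in C([0,T],\mathcal{M}_1(\R^d\times\R_+))$, as claimed.

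The only genuinely delicate point is conceptual: one must be certain that \emph{all} the randomness of the limiting RPDE is carried by the pair $(\mu_0,\xi)$, so that the \emph{pathwise} uniqueness of Theorem 2.8 indeed yields uniqueness in law of the limit point. This is exactly where the deterministic character of $\mu_0$ (the law of large numbers for the i.i.d.\ initial conditions) and the fact that $\xi$ is not resampled along the sequence enter; absent these, a Yamada--Watanabe style argument would be needed to pass from pathwise to distributional uniqueness.
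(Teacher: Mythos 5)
Your argument is correct and follows essentially the same route as the paper: tightness (Proposition 2.6) gives subsequential weak limits, Theorem 2.7 identifies every limit point as a solution of the limiting random PDE, Theorem 2.8 gives uniqueness, and the standard ``relatively compact with a single limit point'' argument upgrades this to convergence of the whole sequence. Your additional remark on why pathwise uniqueness given the fixed data $(\mu_0,\xi)$ yields uniqueness of the limiting \emph{law} makes explicit a point the paper's proof passes over quickly, but it is a refinement of, not a departure from, the same argument.
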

\begin{proof}[Proof.]
What we have now is that every subsequence of $\{Q^N\}_{N\in\N}$ admit a limit measure $Q$ such that every $(\mu,\xi)$ realizing the law solves $\mathbb{P}-a.s.$ the weak formulation of $(2.2)$.\\
From the previous theorem we now that any process $\nu_t$ that solves the same identity is such that $\mu_t=\nu_t,\ \mathbb{P}-a.s.$.\\
So any process realizing any limiting law are $\mathbb{P}-a.s.$ equal and so they have the same law. 
\end{proof}
\section{Conjecture on PDE-PDE Limit: Local Interaction}\label{tcseq}
Before analyzing numerically our particle system and retrieve useful insight on turbulence and coagulation formation; In this section we show formally, how we can relate the limiting equation of our particle system to the Smoluchoski Coagulation random PDE (1.1), objective of this paper.\\
We also conjecture the convergence of a sequence of solution $\rho_t^\delta$ of the random Smulochovski PDE, as $\delta\rightarrow0$, in the interaction kernel for the volume.\\
First of all, following the result of (\cite{amann},\cite{1937-1632_2020_12_3319},\cite{article2}), we suppose that our system admits a measure valued solution with  density $\rho_t(x,v)\in C([0,T],L^2(\R^d\times \R_+))\cap L^1(\Omega,C([0,T],L^2(\R^d\times\R_+))$.
\subsection{Approximation of the Local Interaction Kernel}\label{locint}
The rescaled interaction Kernel considered for the volume exchange between particles is of the form
$$
T^{\delta}(x,v,y,w):= \delta^{-3}\mathbb{I}_{B_x(\delta f(v,w))}\ k(v,w),
$$
where we have defined:
\begin{align*}
    &B_x(\delta f(v,w)):=\{y\in\R^d\ :\ |x-y|_d\leq \delta f(v,w)\}, \forall\delta,v,w\in\R;\\
    &f(v,w):=\left(v^{1/3}+w^{1/3}\right)\left(\frac{3}{4\pi}\right)^{1/3};\\
    &k(v,w)\in C^0_b,\ collision\ coefficient.
\end{align*}
Fix $d=3$, we consider now the definition of the mean field equation in weak form, when $\mu_t$ has a density $\rho_t$, for a fixed $\omega,\phi$:
\begin{align}
   \brak{\rho_t,\phi}-\brak{\rho_0,\phi}=&\int_0^t\ \brak{\rho_s,\frac{\varepsilon_x^2}{2}\Delta_x\phi}ds+ \int_0^t\ \brak{\rho_s,\sigma(x,v)\cdot\nabla_x\phi}\xi_s ds \\
   &+\int_0^t\ \brak{\rho_s,\brak{\rho_s,T^\delta J^\phi}_{y,w}}_{x,v}ds\nonumber
\end{align}
where the bracket are to be intend as a $L^2$ product. We see that the only complicated term is the one involving $T^{\delta}$
\begin{align*}
&\left(  \brak{\rho_s,T^\delta J^\phi}_{y,w}\right)  \left(  x,v\right):=\\
&=\int \delta^{-3}\mathbb{I}_{B_x(\delta f(v,w))}\ k(v,w)\left(  \int \underbrace{\left[\mathbb{I}_{(v>=w)}\phi(x,v+w)+\mathbb{I}_{v<w}\phi(y,v+w)-\phi(x,v)-\phi(y,w)\right]}_{J^\phi_{x,v,y,w}}\rho\left(  y,w\right)  dw\right)  dy\\
&=\int_{\R_+} \left(\delta^{-3}\int_{B_x(\delta f(v,w))}J^\phi_{x,v,y,w}\rho_t(y,w)dy\right)k(v,w)dw
\end{align*}
Heuristically, if $\rho$ is regular enough in $w$, since $\phi$ is a test function and therefor regular, we can use a Lebesgue differentiation theorem, and if we take the mean of the integral over the ball as $\delta\rightarrow0$, we have:
$$
a.e.\ x\in\R^d\, \left(\left(\delta f(v,w)\right)^{-3}\int_{B_x(\delta f(v,w))}J^\phi_{x,v,y,w}\rho_t(y,w)dy\right)\rightarrow J^\phi_{x,v,w}\rho_t(x,w),\ as\ \delta\rightarrow0,\ \forall\ w\in\R.
$$
where
$$
J^\phi(x,v,w)=\left[\phi(x,v+w)-\phi(x,v)-\phi(x,w)\right],
$$
and so the convergence of ``$T^{\delta}\ast\rho$" to the new kernel:
\begin{align*}
\left(  \brak{\rho_s,T^\delta J^\phi}_{y,w} \right)  \left(  x,v\right)    &
:=\int_\R \left(f(v,w)\right)^3\left(\left(\delta f(v,w)\right)^{-3}\int_{B_x(\delta f(v,w))}J^\phi_{x,v,y,w}\rho_t(y,w)dy\right)k(v,w)dw\\
&\rightarrow \int_\R\ f(v,w)^3\  k(v,w) J^\phi(x,v,w)\rho_t(x,w) dw,\ a.e. x\in\R^d,\ \forall\ v\in\R_+.
\end{align*}
Resulting in the Random Smoluchoski Coagulation PDE with a turbulence field as in definition $2.2$:
\begin{align}
   \brak{\rho_t,\phi}-\brak{\rho_0,\phi}=&\int_0^t\ \brak{\rho_s,\frac{\varepsilon_x^2}{2}\Delta_x\phi}ds+ \int_0^t\ \brak{\rho_s,\sigma(x,v)\cdot\nabla_x\phi}\xi_s ds \\
   &+\int_0^t\ \brak{\rho_s,\brak{\rho_s,k(v,w)(f(v,w))^3 J^\phi(x,v,w)}_{x,w}}_{x,v}ds\nonumber
\end{align}
\subsection{Conjecture: Converge to Smoluchoski Random Equation}
To conclude, in this section we give a brief idea of a possible convergence of sequence of solution under some additional assumption on the regularity of the random PDE.\\
Consider again the candidate limiting equation in weak form:
\begin{align}
   \brak{\rho_t,\phi}-\brak{\rho_0,\phi}=&\int_0^t\ \brak{\rho_s,\frac{\varepsilon_x^2}{2}\Delta_x\phi}ds+ \int_0^t\ \brak{\rho_s,\sigma(x,v)\cdot\nabla_x\phi}\xi_s ds \\
   &+\int_0^t\ \brak{\rho_s,\brak{\rho_s,k(v,w)(f(v,w))^3 J^\phi(x,v,w)}_{x,w}}_{x,v}ds\nonumber
\end{align}
And the approximate sequence of solutions $\rho_t^\delta$ that solves in weak sense:
\begin{align}
   \brak{\rho_t,\phi}-\brak{\rho_0,\phi}=&\int_0^t\ \brak{\rho_s,\frac{\varepsilon_x^2}{2}\Delta_x\phi}ds+ \int_0^t\ \brak{\rho_s,\sigma(x,v)\cdot\nabla_x\phi}\xi_s ds \\
   &+\int_0^t\ \brak{\rho_s,\brak{\rho_s,T^\delta J^\phi}_{y,w}}_{x,v}ds\nonumber
\end{align}

\begin{proposition}[Limit $\delta$]
Let $\rho_t(x,v)\in C([0,T],L^1\cap L^2(\R^d\times \R_+))\cap L^1(\Omega,C([0,T],L^1\cap L^2(\R^d\times\R_+))$ be a weak solution of $(1.1)$. Suppose that $\mu_0$ has density $\rho_0$ that satisfy hypothesis $(a,b,c)$ as in section.\ref{psm}. Suppose also that the kernel is symmetric and $K(v,w)f(v,w)^3\leq v^\alpha$, with $\alpha\leq 1$.\\
Let $\{\rho_t^\delta\}_\delta$ solutions of the equation with kernel $T^\delta$, then we have
\begin{align*}
    \sup_{t\in[0,T]}\|\rho_t-\rho_t^\delta\|_{L^2(\Omega\times(0,R))}\rightarrow 0,\ as\ \delta\rightarrow0,\ \forall\ R>0,\ \forall \Omega \subset\subset \R,\ \mathbb{P}-a.s.
\end{align*}
\end{proposition}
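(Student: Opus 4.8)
The plan is to mimic the uniqueness argument above, comparing $\rho^\delta_t$ directly with the fixed limit solution $\rho_t$ and carrying along an extra \emph{consistency error} that measures how far the nonlocal kernel $T^\delta$ is from the local kernel $k(v,w)f(v,w)^3$. Since $\rho,\rho^\delta\in C([0,T],L^1\cap L^2)$ pathwise and all the noise dependence enters only through $\|\xi_\cdot(\omega)\|_{L^\infty_t}$, which is a.s.\ finite, I would run the estimate for fixed $\omega$, so the conclusion is $\mathbb{P}$-a.s. Throughout I would restrict to test functions $\phi$ supported in $v\in(0,R)$, so that $k(v,w)f(v,w)^3\le v^\alpha\le R^\alpha$ on the relevant range, the integration in $w$ over $\R_+$ (which enters only the gain term) being controlled by $\rho_s,\rho^\delta_s\in L^1$ together with the first-moment bound.

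Fixing $R>0$ and setting $e^\delta_t:=\rho^\delta_t-\rho_t$, I would test both weak formulations against $e^{(t-s)\Delta_x}\phi$ and subtract: the $\Delta_x$ terms cancel against the semigroup, leaving a transport term and a coagulation term. The transport term $\int_0^t\langle e^\delta_s,\sigma\cdot\nabla_x(e^{(t-s)\Delta_x}\phi)\rangle\,\xi_s\,ds$ is bounded exactly as in the uniqueness proof by $C\int_0^t(t-s)^{-1/2}\|e^\delta_s\|_R\,ds$, using $\|\nabla e^{(t-s)\Delta_x}\|\lesssim(t-s)^{-1/2}$, $\|\sigma\|_\infty\le K_\sigma$ and $\|\xi\|_{L^\infty_t}<\infty$ (here $\|\cdot\|_R$ denotes the norm in the statement). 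The coagulation difference I would split as
\[
Q^\delta(\rho^\delta_s,\rho^\delta_s)-Q(\rho_s,\rho_s)=\big[Q^\delta(\rho^\delta_s,e^\delta_s)+Q^\delta(e^\delta_s,\rho_s)\big]+\big[Q^\delta(\rho_s,\rho_s)-Q(\rho_s,\rho_s)\big],
\]
where $Q^\delta$ and $Q$ are the bilinear forms with kernels $T^\delta J^\phi$ and $k(v,w)f(v,w)^3J^\phi$ respectively.

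For the first bracket the crucial point is that, although $T^\delta$ carries the factor $\delta^{-3}$, the convolution kernel $y\mapsto\delta^{-3}\mathbb{I}_{B_x(\delta f(v,w))}(y)$ has $L^1_x$-norm $\tfrac{4\pi}{3}f(v,w)^3$, \emph{independent of $\delta$}; hence by Young's inequality it acts on $L^2_x$ with operator norm $\lesssim f(v,w)^3$ uniformly in $\delta$. Combined with $kf^3\le R^\alpha$, boundedness of $\phi$, and the $\delta$-uniform first-moment bound $\sup_t\int v\,\rho^\delta_t\le\int v\,\rho_0$ of Theorem~2.1, this yields $\|Q^\delta(\rho^\delta_s,e^\delta_s)+Q^\delta(e^\delta_s,\rho_s)\|_R\lesssim\|e^\delta_s\|_R$, uniformly in $s$ and $\delta$, so this bracket contributes $C\int_0^t\|e^\delta_s\|_R\,ds$. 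For the second bracket — the consistency error — I would exploit that $\rho$ is \emph{fixed}: since $s\mapsto\rho_s$ is continuous into $L^1\cap L^2$, the set $\{\rho_s:s\in[0,T]\}$ is compact there, and the averaging operators $A^{v,w}_\delta g(x):=(\delta f(v,w))^{-3}\int_{B_x(\delta f(v,w))}g(y)\,dy$ converge strongly to the identity as $\delta\to0$ ($L^p$ Lebesgue differentiation), hence uniformly on this compact set and locally uniformly in $(v,w)$. Inserting this into the expansion of $\langle\rho_s,T^\delta J^\phi\rangle-\langle\rho_s,kf^3J^\phi\rangle$ carried out in Section~\ref{locint}, integrating against $k$, $w$ and $v\in(0,R)$ with the same bounds, and taking the supremum over admissible test functions and over $t\in[0,T]$, I expect a quantity $\epsilon(\delta)\to0$ as $\delta\to0$. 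Assembling the three estimates gives
\[
\sup_{r\le t}\|e^\delta_r\|_R\le\epsilon(\delta)+C\int_0^t\frac{\sqrt{t-s}+1}{\sqrt{t-s}}\sup_{r\le s}\|e^\delta_r\|_R\,ds,
\]
and the singular Gronwall lemma already used in the uniqueness theorem yields $\sup_{t\le T}\|e^\delta_t\|_R\le C_T\,\epsilon(\delta)\to0$.

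The step I expect to be the main obstacle is the consistency error: upgrading the pointwise Lebesgue-differentiation heuristic of Section~\ref{locint} to a bound uniform in $t\in[0,T]$ and integrable in $(v,w)$. Compactness of $\{\rho_s\}$ takes care of the time-uniformity, but one must check that the averaging radius $\delta f(v,w)$ degenerates neither as $v,w\to0$ (where $f\to0$) nor as $w\to\infty$: the former is controlled using the cutoff $v\in(0,R)$ and integrability of $\rho$, while the $w$-tail is absorbed by the decay of $k$ forced by $kf^3\le v^\alpha$ with $\alpha\le1$. A secondary subtlety, flagged in the author's own note, is avoiding a $\delta$-uniform $L^\infty$ bound on $\rho^\delta$: the $L^1$-boundedness of the averaging kernel noted above is exactly what lets the whole argument run with only $L^2$ and first-moment control.
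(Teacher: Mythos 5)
Your proposal follows essentially the same strategy as the paper's proof: subtract the two weak formulations, split the coagulation difference into bilinear terms in the error plus a consistency error $(F^{\phi,\delta}-F^{\phi})(\rho)$ evaluated on the fixed limit solution $\rho$, send the consistency error to zero by Lebesgue differentiation, and close with Gronwall after taking a supremum over normalized test functions. The one structural difference is that you test against the heat semigroup $e^{(t-s)\Delta}\phi$ and invoke the singular Gronwall lemma, as in the paper's uniqueness theorem, whereas the paper's own proof of this proposition keeps the diffusion term $\langle \rho_t-\rho_s^\delta,\Delta_x\phi\rangle$ explicitly against the smooth test function, so only a regular (non-singular) Gronwall inequality is needed; both routes work, yours is merely slightly heavier. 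On the other hand your write-up makes explicit two points the paper leaves implicit: the Young-inequality observation that $y\mapsto\delta^{-3}\mathbb{I}_{B_x(\delta f(v,w))}(y)$ has $L^1_x$-norm proportional to $f(v,w)^3$ independently of $\delta$, which is exactly what makes the constants in the bilinear error terms $\delta$-uniform and lets one avoid any $\delta$-uniform $L^\infty$ bound on $\rho^\delta$, and the compactness of $\{\rho_s:\ s\in[0,T]\}$ in $L^1\cap L^2$, which upgrades pointwise Lebesgue differentiation to the uniform-in-time smallness $\epsilon(\delta)$ that the Gronwall step actually requires. These are precisely the places where the paper's constants $C^{\phi}_{\Omega,R}$ and the asserted convergence of $Ld^\delta$ to $0$ in $L^\infty_v L^1_x$ are stated without detail, so your version is, if anything, more complete than the printed argument.
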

\begin{proof}
Let's consider a test function $\phi$ and the difference between $\rho_t^\delta,\ \rho_t$ in the weak formulation:
\begin{align*}
\brak{\rho_t-\rho_t^\delta,\phi}&= \int_0^t\brak{\rho_t-\rho_s^\delta,\mathcal{U}_s(x,v)\cdot \nabla_x\left(\phi\right)}ds+\int_0^t\brak{\rho_t-\rho_s^\delta,\Delta_x\left(\phi\right)}ds\\
&\ \ \ +F_{x,v}^{\phi,\delta}(\rho_t^\delta)-F_{x,v}^\delta(\rho_t).
\end{align*}
where the non linear term are
\begin{align*}
&F_{x,v}^\phi(\rho_t)=\int_0^t\brak{\rho_s,\brak{\rho_s,T(v,w)J^{\left(e^{(t-s)A}\phi\right)}}_x}ds\\
&F_{x,v}^{\phi,\delta}(\rho_t^\delta)=\int_0^t\brak{\rho^\delta_s,\brak{\rho^\delta_s,T^\delta J_y^{\left(e^{(t-s)A}\phi\right)}}}ds.
\end{align*}
Consider now the modulus and then, term by term, $\mathbb{P}-a.s.$
\begin{align*}
    &\int_0^t|\brak{\rho_t-\rho_s^\delta,\mathcal{U}_s(x,v)\cdot \nabla_x\left(\phi\right)}|ds\leq \|\xi\|_{L^\infty}\int_0^tC^\phi_{\varepsilon,T}K_\sigma\|\rho_t-\rho_s^\delta\|^2_{2}ds\\
    &\int_0^t|\brak{\rho_t-\rho_s^\delta,\Delta_x\left(\phi\right)}|ds\leq \int_0^t C^\phi_{T}\|\rho_t-\rho_s^\delta\|^2_{2}ds
\end{align*}
while for the other terms we need to add an subtract to get

\begin{align*}
    &F_{x,v}^{\phi,\delta}(\rho_t^\delta-\rho_t)-(F_{x,v}^{\phi,\delta}-F_{x,v}^\phi)(\rho_t):=\\
    &=|\int_0^t\brak{\rho^\delta_s-\rho_s,\brak{\rho^\delta_s,T^\delta(v,w)J^{\left(\phi\right)}}}ds+\int_0^t\brak{\rho_s,\brak{\rho^\delta_s-\rho_s,T^\delta(v,w)J^{\left(\phi\right)}}}ds-(F_{x,v}^{\phi,\delta}-F_{x,v}^\phi)(\rho_t)|\\
    &\leq\left(\sup_t\|\rho_s^\delta\|_1+\sup_t\|\rho_s\|_1\right)\int_0^tC^\phi_{\Omega,R}\|\rho^\delta_s-\rho_s\|_2^2ds+|(F_{x,v}^{\phi,\delta}-F_{x,v}^\phi)(\rho_t)|
\end{align*}
For the last term we have
\begin{align*}
    &(F_{x,v}^{\phi,\delta}-F_{x,v}^\phi)(\rho_t)=\\
    &=\int_0^t|\brak{\rho_s,\brak{\rho_s,T^\delta\ J^{\left(\phi\right)}}-\brak{\rho_s,T J^{\left(\phi\right)}}_x}|ds\leq\\
    &\int_0^t|\brak{\rho_s,\int_0^R k(v,w)\left(\int_\Omega \rho_t T^\delta \tilde{J}^\phi dy-\rho_sT J^{\left(\phi\right)}\right)dw}ds\leq C^\phi\int_0^t|\brak{\rho_s,\int_0^R k(v,w)Ld^\delta(\rho_t T J^\phi)dw}ds
\end{align*}
Where
$$
Ld^\delta(\rho_tTJ^\phi):=\left(\int_\Omega \rho_t T^\delta \tilde{J}^\phi dy-\rho_sT J^{\left(\phi\right)}\right)\rightarrow0,\ in L^\infty_vL^1_x,\ as \delta\rightarrow0
$$
thanks to Lebesgue differentiation theorem and bound on the volume component of the density. Putting all together we have:
\begin{align*}
    & C^\phi\int_0^t|\brak{\rho_s,\int_0^R k(v,w)Ld^\delta(\rho_t T J^\phi)dw}ds\\
    &\leq \int_0^tc^\phi M_1(v\rho_t)\sup_{w\in(0,R)}\|Ld^\delta(\rho_tT)\|\leq c^\phi M_1(v\rho_0)\sup_{w\in(0,R)}\|Ld^\delta(\rho_tT)\|ds:=\int_0^tA_\delta C^\phi_{R,\rho_0,\Omega}ds
\end{align*}
where the last inequality follows from the bound on the microscopic kernel $k$. If we put all the estimate together, taking the sup over all test functions with $L^2$ norm equal 1, we obtain with Gronwall the following
$$
\sup_t\|\rho_t^\delta-\rho_t\|_2^2\leq \left(\sup_t (A_\delta) T\right)\exp(C^\phi_{R,\Omega,\rho_0})\rightarrow0,\ as\ \delta\rightarrow0.
$$
With this we conclude the proof.
\end{proof}
\section{Numerical Simulation: First Time Rain Generation}\label{numan}
We conclude this work with a numerical simulation, with a subsequent analysis, of the particle system in two different setting to better understand the role of turbulence in the generation of rain inside a cloud formation.\\
In detail, what we are interested in the study of the first formation time 
$$\tau_f:=\inf\{t\geq 0 | \exists i\in N_t,\  R^i_t>=R_{rd}\},$$
where $R_{rd}:=0.0004$ is the radius of the typical raindrop.\\
What we obtain is a decay dependence of the aforementioned exit time in different setting: a pure Brownian case, and a random field driven particles.\\
\\
In the general setting we consider the following positional equation for the particle system
\begin{align}
    &dX^i_t=\varepsilon_{rf}\sum_{k\in K}\sigma_k(X_t^i)\xi_t^k dt-\hat{e}_2f(V_t^i)dt+\varepsilon_{bm} \sigma dB_t^i,
\end{align}
for $i=1,...,N$, which we have simulated with an Eulero-Maruyama scheme.\\
$\varepsilon_{rf},\varepsilon_{bm}\in\{0,1\}$ select the setting in which we investigate the formation time $\tau_f$.\\
The domain in which we have done our simulation is $[-2,2]^2$ (that is intended to be measured in $cm$) with periodic condition on the border, so that it represent a zoom inside the cloud and we assume that as the particles move they are replaced with probability one with particles of almost the same size.\\
Concerning the velocity random field in which the particles moves, it is defined as the stochastic process $\mathcal{U}\left(  t,x\right)  $%
\[
\mathcal{U}\left(  t,x\right)  =\sum_{k\in K}\sigma_{k}\left(  x\right)  \xi_{t}^{k}%
\]%
\[
\sigma_{k}\left(  x\right)  =\frac{1}{2\pi}\frac{\left(  x-x_{k}\right)
^{\perp}}{\left\vert x-x_{k}\right\vert ^{2}},\ \left(  a,b\right)  ^{\perp}=\left(  b,-a\right)%
\]%
where points $x_{k}$ are fixed and selected with a uniform distribution over the considered domain, representing vortex generated by the turbulent flow in which the droplet are submerged. We have used the notation such that $\sigma_k(x_k)=0$.\\
The real-valued stochastic processes $\xi_t^k$ are Ornstein-Uhlenbech that satisfy:
\[
d\xi_{t}^{k}  =-\lambda\xi_{t}^{k}dt+\lambda dB_{t}^{k},\ \ \  
\xi_{0}^{k}  =0,\ k  =1,...,K.\\
\]%
which we have also simulated with an Eulero-Murayama scheme and used the result to compute $(4.1)$.\\
The initial condition of the particle, $X_0^i$, are sampled from a uniform distribution over the domain, while the initial volumes $V_0^i>0$ are selected uniformly in the range of a large raindroplet ($\sim 10^3m)$.\\
We specify also that we assume the raindrops are spherical and as such the volume can be represented as
$$
\forall i=1,...,N,\ V_t^i\sim\frac{4}{3}\pi(R_t^i)^3.
$$
While the function $f$ representing the limit velocity induced by the mass of the raindrop was simulated with a standard logistic function such that the limit is exactly the terminal velocity as in $[\cdot]$.
\subsubsection{Stochastic coalescence:} 
Two real-droplets may collide and coalesce completely into one big real-droplet and this process is responsible for precipitation development.\\ 
Our primary postulate, which is commonly used in many previous studies, is that the coalescence process can be described in a probabilistic way.\\
So, what we expect is that our $V_t^{i,N}$, of the $i-th$ particle, changes not in a continuous way, but with jumps proportional to the particle that $i$ interact with.\\
With this we mean that, when two particle interact with each other there is a chance that the volume of the $i-th$ particle absorb or get absorbed by the other particle. So that the volume of a particle can increase with a certain probability, only when collide with other rain droplets.\\
\\
For the sake of 
completeness we need to define a $P_{mean}$ 
that represent a threshold that encapsulate the possibility that two rain droplets can interact or bounce from each other, and if extracting a random number we surpass this threshold that coalescence can happen, otherwise the particles get along with their motion.\\
To state it clearly this is how our coalescence works:\\
\paragraph{\textbf{Algorithm for stochastic coalescence}} 
\begin{enumerate}
    \item Let $x_i,\ i=1,...,N$ the particles of the system, and denote $R_i$ their radius;
    \item For every couple $(i,j)$  such that $|x_i-x_j|\leq R_i+R_j$, we generate a number $\phi_i^j\in (0,1)$ with a uniform distribution;
    \item If $\phi_i^j>P_{mean}$, then the coalescence happens in the following way:
    \begin{itemize}
        \item If $V_t^{i,N}>=V_t^{j,N}$ then
        \begin{itemize}
            \item $V_{t}^{i,N}= V_t^{i,N} + V_t^{j,N}$;
            \item $V_{t}^{j,N}=0$ and the particle $j$ is removed from the system.
        \end{itemize}
        \item Vice versa if  $V_t^{j,N}>V_t^{i,N}$;
    \end{itemize}
    \item If $\phi_i^j<P_{mean}$, then the raindroplet bounced of from each other.
\end{enumerate}
\subsection{Pure Brownian Movement}
In this small section we analyze the particle system under the action of indendent brownian motion attachted to the droplets.\\
We set $\varepsilon_{rf}=0,\varepsilon_{bm}=1$ in $(4.1)$ and we reduce the model to the motion equation given by:
\begin{align}
    &dX^i_t=-\hat{e}_2f(V_t^i)dt+\sigma dB_t^i,
\end{align}
where $X_0^i$ are drawn from a uniform distribution over the domain $[-2,2]^2$.\\
The radius of the particles are selected in such a way that a raindrop is created as soon as the value it the threshold 
$$
R^i_t\sim 0.004 m
$$
and the radius $R_0^i$ of the starting rain droplets  are drawn from a uniform distribution between values $10^{-1},\ 10^{-2}$ smaller than a real raindrop.\\
Finding the right time step was challenging, since the position of the particle is discrete and as such we could lose interaction between particle in the motion of the rain droplets, causing numerical error and inconsistency in the behaviour of the formation time.\\
While the code can run on a large number of particle, to analyze efficiently the result we set our parameters as follows:
$$
dt=0.0001,\ \ N=1000,\ \ T_{MaxIt}=3000,
$$
where $N$ represents the initial number of particle and $T_{MaxIt}$ the maximal iteration of the system.\\
To simplify the analysis we postpone the investigation of the coefficient $P_{mean}$ assuming it is independent from the fact that the motion is subjected under the action of a random field or independent Brownian Motion.\\
In such a way we can assume without loss of generality that $P_{mean}=0$, and the coalescence algorithm is as follows:
\begin{enumerate}
    \item Let $x_i,\ i=1,...,N$ the particles of the system, and denote $R_i$ their radius;
    \item For every couple $(i,j)$  such that $|x_i-x_j|\leq R_i+R_j$:
    \begin{itemize}
        \item If $V_t^{i,N}>=V_t^{j,N}$ then
        \begin{itemize}
            \item $V_{t}^{i,N}= V_t^{i,N} + V_t^{j,N}$;
            \item $V_{t}^{j,N}=0$ and the particle $j$ is removed from the system.
        \end{itemize}
        \item Vice versa if  $V_t^{j,N}>V_t^{i,N}$;
    \end{itemize}
\end{enumerate}
We wan to analyze the formation time $\tau_{f}$ dependent on the intensity $\sigma$ of the independent Brownian motions.\\
To do so, for each parameter, we have simulated ten times the particle system, collecting the time formation in a time series from which we extract the mean time. In detail we have, calling $N_r$ the number of repetition, $T^i_f$ the formation time of the $i-th$ simulation, the following:
$$
N_r=10,\ \ T_{mean}=\mathbb{E}[T_f]\sim\frac{1}{N_r}\sum_{i=1}^{N_{r}} T_f^i, \forall \sigma \in \Sigma.
$$
Where $\Sigma:=\{0.1,0.2,0.3,0.4,0.5,0.6,0.7,0.8,0.9,1\}$, is a partition of the reasonable interval, $[0,1]$, for the intensity of a realistic Brownian motion.
\subsubsection{A simplified theoretical reasoning}
Before showing the numerical result, we show a theoretical reasoning on a simplified model that can explain the expected result in such a model.\\
Let us consider two particles moving subjected to independent Brownian motion with different initial condition: $\sigma W^1_t+x_1$, $\sigma W_t^2+x_2$ on the unitary Torus.\\
Call $x_0 = x_1-x_2$ and define the random time
\[
\tau_{\sigma,x_0}:=\inf\{t\geq0: |\sigma W_t^1-\sigma W_t^2 +x_0|\leq \varepsilon\}
\]
which is linked to our formation time, since coalescence happens when two raindrops are near depending on a setted threshold.\\
Then we ask the question: how $\sigma$ impact the collision time of particles?\\
This quantity cannot be independent from $\sigma$, indeed call $B_t = W_t^1- W^2_t$, this is a new Brownian motion, with intensity dependent from the dimension (but fixed), so without loss of generality we fixed it at 1.\\
Thus we consider the quantity $\tau_{\sigma,x_0}$:
\begin{align*}
\tau_{\sigma,x_0}:&=\inf\{t\geq0: |\sigma B_t +x_0|\leq \varepsilon\}\\
&=\inf\{t\geq0: |\sigma B_{\sigma^2 t/\sigma^2} +x_0|\leq \varepsilon\}\\
&=\inf\{t\geq0: |W_{\sigma^2t} +x_0|\leq \varepsilon\}\\
&=\tau_{1,x_0}/\sigma^2.
\end{align*}
Here we have used the auto-similarity of the Brownian motion.\\
What we recover is that there is an inverse quadratic dependence of the formation time with the intensity of the Brownian motion moving the particles. That we can express in a sintetic form as
$$
\mathbb{E}\left[\tau_{\sigma,x_0}\right]\sim \sigma^{-2}.
$$
\subsubsection{Results on Regression}
Let's start showing the result table of the mean formation time and the standard deviation, in dependence of Brownian intensity.\\ (Note that to obtain a time measure we need to multiply the epoch with the step time $dt$).\\
Following the previous reasoning we expect that our formation time depend with a square inverse power with the intensity of the diffusion
$$
\mathbb{E}\left[\tau_{f}(\sigma)\right]\sim \sigma^{-2},
$$
where we have highlighted the dependence on $\sigma$.\\
\begin{center}
\begin{tabular}{ |c|c|c| } 
 \hline
 Brownian intensity &  Mean time & Standard Deviation \\ 
 1 & 636 & 114.9043 \\ 
 0.9 & 641.4 & 119.8025\\
 0.8 & 691.5 & 93.29014 \\ 
 0.7 & 734.9 & 170.0514\\
 0.6 & 808 & 165.7607 \\ 
 0.5 & 917.6 & 234.5115 \\
 0.4 & 1086.9 & 312.3816 \\ 
 0.3 &  1569.8 & 260.9294\\
 0.2 &  2896.5 &  387.9338;\\ 
 0.1 &  4567 & 434.2\\
 \hline
\end{tabular}
\end{center}
\begin{figure}[!tbp]
  \centering
  \begin{minipage}[b]{0.7\textwidth}
\includegraphics[width=0.7\textwidth]{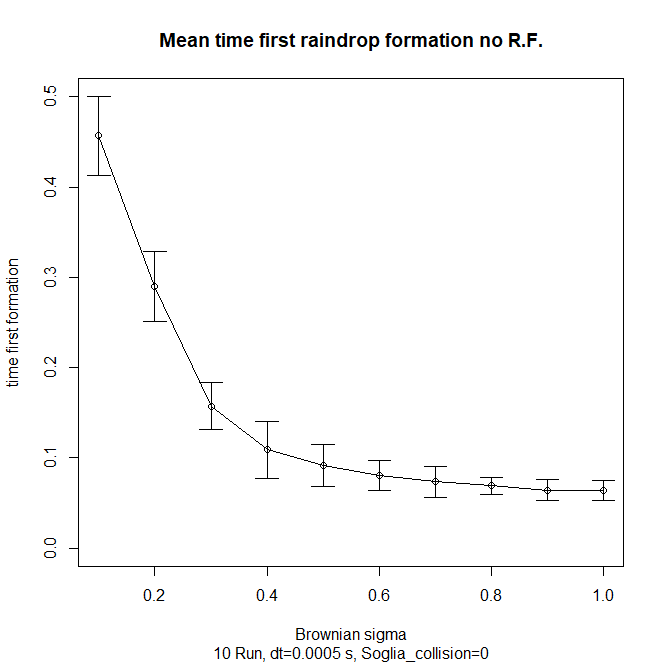}
  \end{minipage}
\hfill
  \begin{minipage}[b]{0.7\textwidth}
    \includegraphics[width=0.7\textwidth]{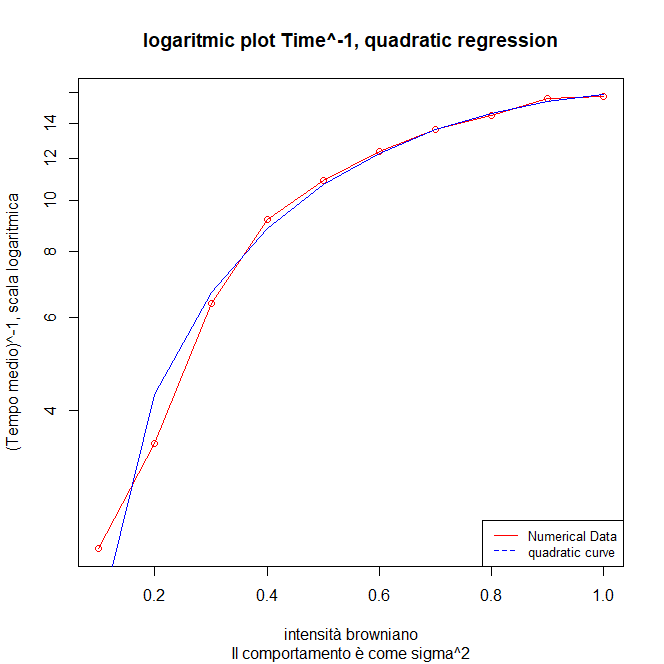}
  \end{minipage}
  \hfill
  \begin{minipage}[b]{0.7\textwidth}
\includegraphics[width=0.7\textwidth]{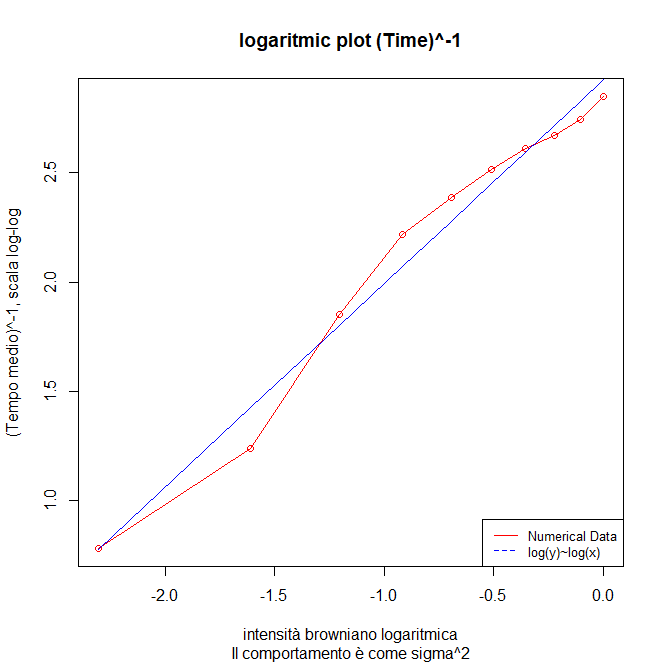}
  \end{minipage}
  \caption{From top to bottom: (a) Plot of Brownian Intensity/Formation time that shows the time decay in dependence with increase in $\sigma$, intensity; (b) Linear-log plot: quadratic regression, in red the interpolated numerical data and in blue the regression curve;(c) log-log plot: logarithmic regression, in red the interpolated numerical data and in blue the regression curve.}
  \label{f1}
\end{figure}
As we can see from the table, there is a significance decay in the mean time as the intensity grows, and, as expected, the standard deviation became smaller.\\
As $\sigma$ grows the initial condition of the system became less and less important for the mean formation time, as we can see as the standard deviation decrease as $sigma$ increases.\\
To understand the decay we proceed our analysis with a quadratic and logarithmic regression on the model.\\ 
In $Figure\ 1$ we can see plotted, $(a)$, the decay of mean formation time that shows the time decay in dependence with the increase in $\sigma$, intensity of the Brownian motions. Again we can appreciate the stabilization of the randomness in the standard deviation, plotted as black bar in figure.\\
\\
First we performed a quadratic regression, for which we show the results in fig.\ref{f1}(b) as a Lin-Log plot. We can already notice how the behavior of the numerical data, interpolated in red, fit the regression line in blue and the most significant error are in the very small regime of $\sigma\sim 0.1$, since the particle are almost still in their motion and the dependence from the randomly generated data is stronger.\\
The regression show a residual plot without any particular structure, with a residual standard error equal to 0.5232 and the R-square equal to 0.992 and a summary that agrees with our conjecture on the quadratic decreasing of time respect to the intensity of the noise.\\
\\
\textbf{Quadratic Regression Summary:}
\begin{verbatim}
Residual standard error: 0.5232 on 7 degrees of freedom
Multiple R-squared:  0.992,	Adjusted R-squared:  0.9897 
F-statistic: 431.5 on 2 and 7 DF,  p-value: 4.673e-08.
\end{verbatim}

To validate our results we concluded the analysis with a logarithmic regression to capture lower order power and fluctuation in the intensity dependence of the exit time. The results in fig.\ref{f1}(c) as a Log-Log plot confirm how the behavior of the numerical data, interpolated in red, fit the regression line, in blue, in accordance to the quadratic regression. Small fluctuations around the regression line can be appreciated, showing lower order dependencies on the intensity, but the residual plot showing no structures with a standard error of 0.1 and a R-squared equal to 0.979, agrees with the hypothesis $\mathbb{E}[\tau_f^\sigma]\sim\sigma^{-2}$.\\
\\
\textbf{Logarithmic Regression Summary:}
\begin{verbatim}
Residual standard error: 0.1061 on 8 degrees of freedom
Multiple R-squared:  0.979,	Adjusted R-squared:  0.9764 
F-statistic: 373.6 on 1 and 8 DF,  p-value: 5.329e-08
\end{verbatim}

\subsection{Random Field Movement}
In this section we are going to analyze the particle system under the action of the Velocity Random Field.\\
Here we present a brief numerical study for the particle system under theoretical study, to motivate future work on the stochastic coalescence and turbulence formation in this interacting system.\\
We considered the following system for the position of the particles
\begin{align}
    &dX^i_t=\sum_{k\in K}\sigma_k(X_t^i)\xi_t^k dt-\hat{e}_2f(V_t^i)dt+\varepsilon dB_t^i
\end{align}
for $i=1,...,N$, which we have simulated with an Eulero-Maruyama scheme.\\
The domain in which we have done our simulation is $[-2,2]^2$ (that is intended to be measured in $cm$) with periodic condition on the border, so that it represent a zoom inside the cloud and we assume that since the particle moves they are replaced with probability one with particles of almost the same size.\\
Concerning the velocity random field in which the particles moves, it is defined as the stochastic process $\mathcal{U}\left(  t,x\right)  $%
\[
\mathcal{U}\left(  t,x\right)  =\sum_{k\in K}\sigma_{k}\left(  x\right)  \xi_{t}^{k}%
\]%
\[
\sigma_{k}\left(  x\right)  =\frac{1}{2\pi}\frac{\left(  x-x_{k}\right)
^{\perp}}{\left\vert x-x_{k}\right\vert ^{2}},\ \left(  a,b\right)  ^{\perp}=\left(  b,-a\right)%
\]%
where points $x_{k}$ are fixed and selected with a uniform distribution over the considered domain. We have used the notation such that $\sigma_k(x_k)=0$.\\
The real-valued stochastic processes $\xi_t^k$ are Ornstein-Uhlenbech that satisfy:
\[
d\xi_{t}^{k}  =-\lambda\xi_{t}^{k}dt+\lambda dB_{t}^{k},\ \ \  
\xi_{0}^{k}  =0,\ k  =1,...,K.\\
\]%
which we have also simulated with an Eulero-Murayama scheme and used the result to compute $(5.1)$.\\
The initial condition of the particle, $X_0^i$, are sampled from a uniform distribution over the domain, while the initial volumes $V_0^i>0$ are selected uniformly in the range of a large raindroplet ($\sim 10^3m)$.\\
We specify also that we assume the raindrop are spherical and as such the volume can be represented as
$$
\forall i=1,...,N,\ V_t^i\sim\frac{4}{3}\pi(R_t^i)^3.
$$
While the function $f$ representing the limit velocity induced by the mass of the raindrop was simulated with a standard logistic function such that the limit is exactly the terminal velocity as in $[\cdot]$.
\subsubsection{Stochastic coalescence:} 
Two real-droplets may collide and coalesce completely into one big real-droplet and this process is responsible for precipitation development.\\ 
Our primary postulate, which is commonly used in many previous studies, is that the coalescence process can be described in a probabilistic way.\\
So, what we expect is that our $V_t^{i,N}$, of the $i-th$ particle, changes not in a continuous way, but with jumps proportional to the particle that $i$ interact with.\\
With this we mean that, when two particle interact with each other there is a chance that the volume of the $i-th$ particle absorb or get absorbed by the other particle. So that the volume of a particle can increase with a certain probability, only when collide with other rain droplets.\\
\\
For the sake of 
completeness we need to define a $P_{mean}$
that represent a threshold that encapsulate the possibility that two rain droplets can interact or bounce from each other, and if extracting a random number we surpass this threshold that coalescence can happen, otherwise the particles get along with their motion.\\
To state it clearly this is how our coalescence works:\\
\paragraph{\textbf{Algorithm for stochastic coalescence}} 
\begin{enumerate}
    \item Let $x_i,\ i=1,...,N$ the particles of the system, and denote $R_i$ their radius;
    \item For every couple $(i,j)$  such that $|x_i-x_j|\leq R_i+R_j$, we generate a number $\phi_i^j\in (0,1)$ with a uniform distribution;
    \item If $\phi_i^j>P_{mean}$, then the coalescence happens in the following way:
    \begin{itemize}
        \item If $V_t^{i,N}>=V_t^{j,N}$ then
        \begin{itemize}
            \item $V_{t}^{i,N}= V_t^{i,N} + V_t^{j,N}$;
            \item $V_{t}^{j,N}=0$ and the particle $j$ is removed from the system.
        \end{itemize}
        \item Vice versa if  $V_t^{j,N}>V_t^{i,N}$;
    \end{itemize}
    \item If $\phi_i^j<P_{mean}$, then the raindroplet bounced of from each other.
\end{enumerate}
Since our objective is study the mean time of such rain formation, we fixed\\ $P_{mean}=0$ as we already expect some fluctuation in the results given by the initial condition, and postpone to later studies the understanding of such physical quantity.
\subsubsection{Random Field:}\textit{Theoretical Conjecture}.
Before moving to the numerical results, we briefly illustrate theoretical conjecture on what we expect to find in the next section.\\
Following the idea from (\cite{f1},\cite{flandoli2021mixing},\cite{f2}) in which a fluid dynamical equations with transport noise converges to deterministic viscous equations such that the viscosity is enhanced by the turbulent random field.\\
We expect that, albeit less impactful than independent Brownian motion attach to each particles, the first exit time depends on the random field parameters as an inverse power $\alpha\in(-2,0)$.\\
\\
Consider our Smoluchoski equation with random field
$$
\partial_t \rho_t+ \sum_{k\in K}\left(\sigma_k(x,v)\cdot \nabla_x\rho_t\right)\xi_t=F_v(\rho_t)
$$
We expect that, under suitable assumption, the solution of such coagulation PDE is close to the solution of the diffusion Smoluchoski equation
$$
\partial_t \overline{\rho}_t+ \frac{\kappa^2}{2}\Delta\overline{\rho}_t=F_v(\overline{\rho}_t),
$$
where $\kappa:=\kappa(K,\lambda)$ depends on the parameters of the system in study.\\
From the theory on the Brownian motion case we expect that
$$
\mathbb{E}\left[\tau_f(\kappa)\right]\sim p(\kappa)^{-1}
$$
where $p(\kappa)$ is at most a polynomial of degree $2$.
\begin{figure}[h]
\centering
\includegraphics[width=\textwidth]{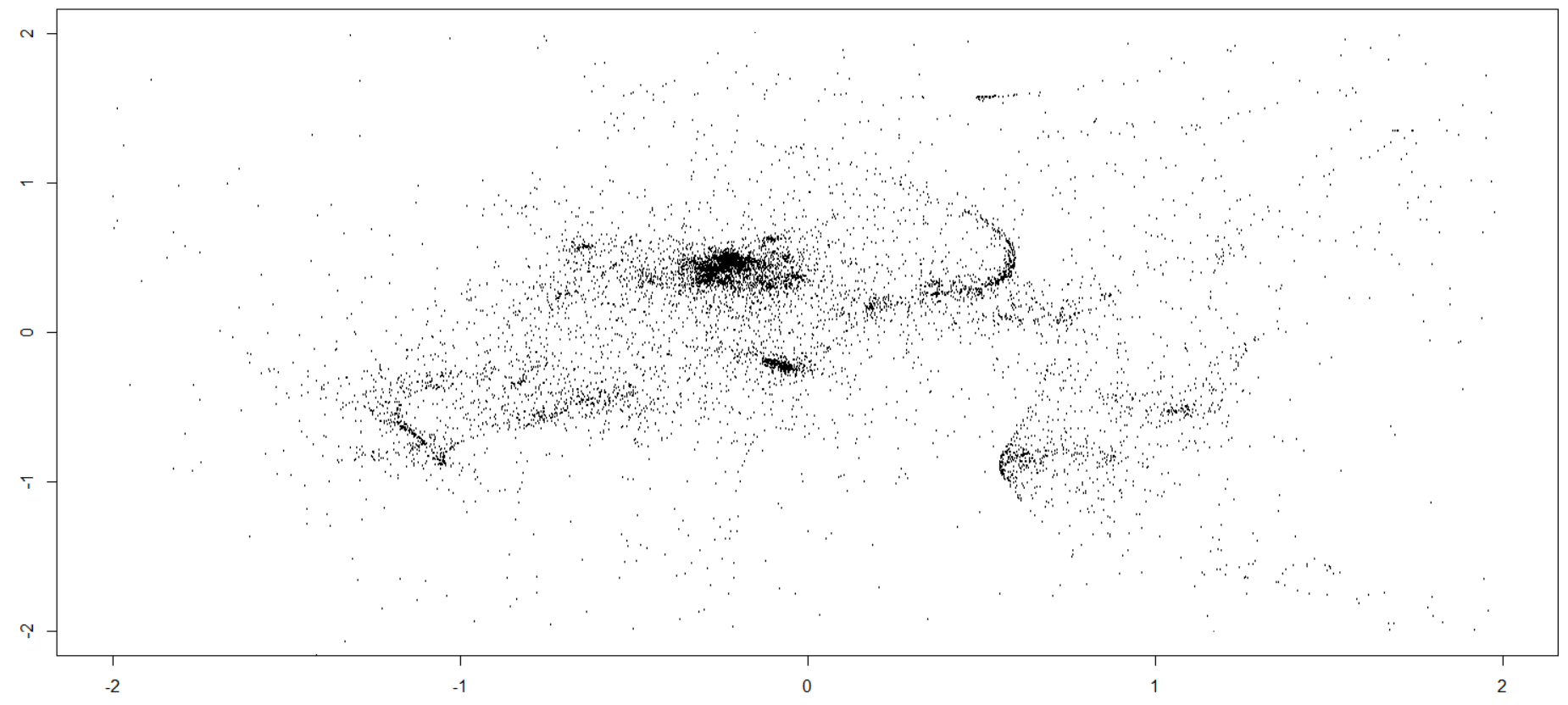}
\caption{Snapshot of particle system dynamics: we can observe the cluster formation obtained from the presence of the vortex. We expect that such cluster, and the randomness of the Ornstein–Uhlenbeck process, are the reason for the decrease in time of rain formation.}
\end{figure}

\subsection{Random Field Analysis: Non-Linear Regression}
We divide the analysis in two main parts. We start fixing the intensity of the Ornstein-Ulhembeck processes and varying the number of vortex in the cloud; later we fix the vortex number and move the intensity of the environmental stochastic processes.\\
Following the previous reasoning we expect that our formation time depend with at most a square inverse power with the number of vortex and intensity of the random processes 
$$
\mathbb{E}\left[\tau_{f}(|H|,\lambda)\right]\sim p(|H|,\lambda)^{-1},
$$
where we have highlighted the dependence on the parameter.\\
From the theoretical conjecture in the previous subsection, we expect that as vortex increase, so does linearly the covariance matrix of the noise, resulting in a square decay for the time.\\ While $\lambda$ model the  temporal structure of the phenomena, thus we expect that the effect is smaller on the vector fields $\sigma$ with a inverse decay trend $\sim \sqrt{\lambda}$.
\subsubsection{Fixed $\lambda$, Vortex Moving}
We analyze here the case in which the intensity of the noise is fixed and the number of vortex in the field changes. We'll see, as claimed at the start of the section that the decay is visible when we increase the number of field $\sigma_j$ interacting with the particles. We show here regression that support the conjecture of a polynomial decay of order $2$.\\ We note that the initial condition produce a very fluctuating system when the vortex number is small and the particles moves very slowly. This is expected, since the collision are reduced and the random initial condition characterize the system a lot. This is show in the high standard deviation that decreases as soon as the vortex number increase.\\
We fix $\lambda$ at 1500, to have enough speed in the particle and make sure that the collisions of droplet happen fast enough to be appreciated, in particular the choice is made to obtain a fast enough system for the required simulation to be made even with very few vortex-like structure.\\
We first performed a quadratic regression from which the residual plot showed no sign of residual structure. The analysis show a great degree of understanding of the data, as such R-squared is equal to $\sim0.98$ and the p-value suggest that our hypothesis on the decay is validated. We report here the complete summary of the regression performed in Fig.4 (left).\\
\\
\textbf{Quadratic Regression Summary Moving Vortex Number:}
\begin{verbatim}
Residual standard error: 0.4133 on 8 degrees of freedom
Multiple R-squared:  0.9798,	Adjusted R-squared:  0.9748 
F-statistic: 194.2 on 2 and 8 DF,  p-value: 1.658e-07
\end{verbatim}
\begin{figure}[!tbp]
  \centering
  \begin{minipage}[c]{0.45\textwidth}
    \includegraphics[width=\textwidth]{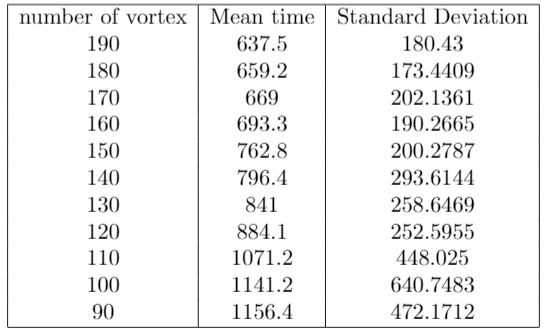}
  \end{minipage}
  \hfill
  \begin{minipage}[c]{0.45\textwidth}
    \includegraphics[width=\textwidth]{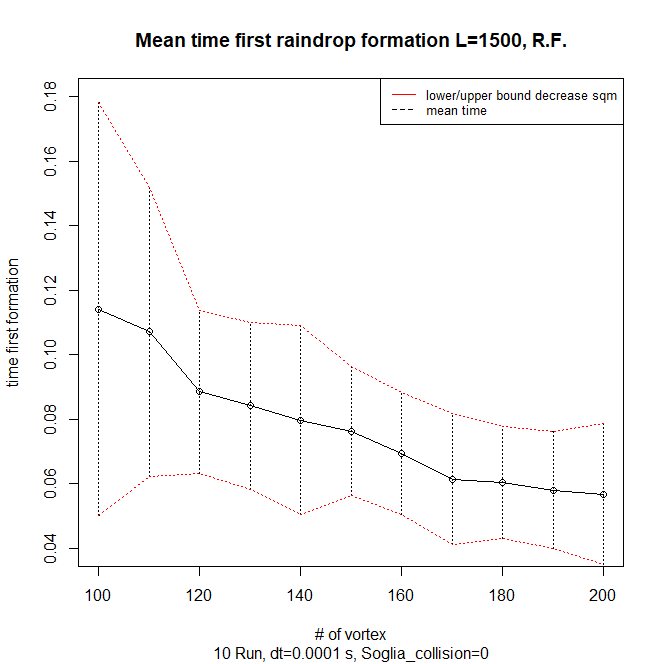}
    
  \end{minipage}
  \caption{Left: table of time decay as vortex number increase and as such the number of vortex in the system grows. Mean time and Standard deviation decays accordingly to the growth of $\#Vortex$; Right: The black line represent the mean time varying the number of vortex, while the red strip are the upper bound and lower bound of the standard deviation.}
\end{figure}

To conclude the analysis we perform a logarithmic regression, that we plotted in Fig.3 (right) in a log-log scale. The analysis is consistent with the quadratic regression performed and showed a good R-squared value. No significant structures were found in the residual plot and the value of the regression supported our hypothesis of a quadratic decay in the time, with small fluctuation due to the randomness of the initial condition that stabilize at higher vortex number.\\
\\
\textbf{Logarithmic Regression Summary Moving Vortex Number:}
\begin{verbatim}
Residual standard error: 0.4133 on 8 degrees of freedom
Multiple R-squared:  0.9798,	Adjusted R-squared:  0.9748 
F-statistic: 194.2 on 2 and 8 DF,  p-value: 1.658e-07
\end{verbatim}

\begin{figure}[!tbp]
  \centering
  \begin{minipage}[b]{0.45\textwidth}
    \includegraphics[width=\textwidth]{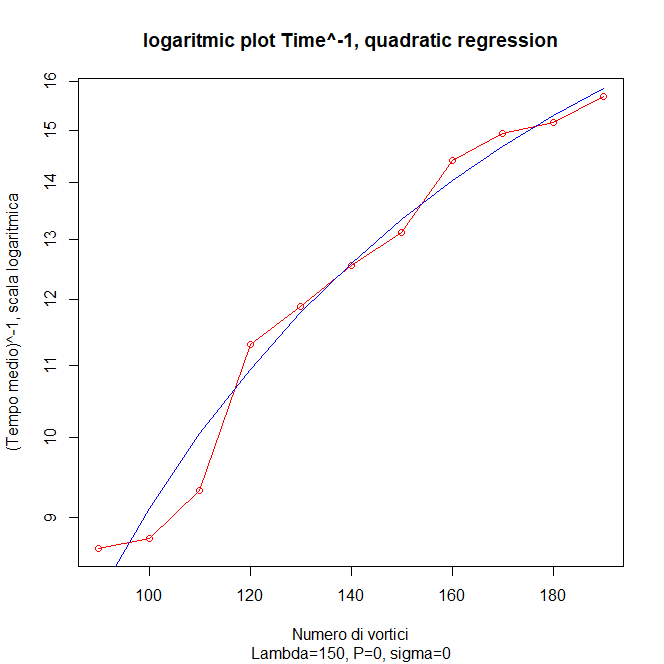}
  \end{minipage}
  \hfill
  \begin{minipage}[b]{0.45\textwidth}
    \includegraphics[width=\textwidth]{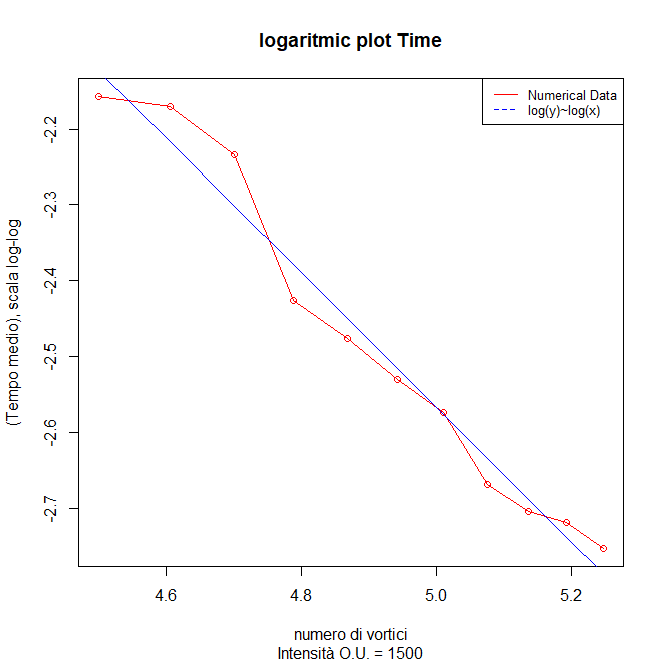}
  \end{minipage}
  \caption{Left:Lin-Log plot of quadratic regression with fixed intensity and varying vortex number; Right: Log-Log plot for Logarithmic regression with fixed intensity and varying vortex number.}
\end{figure}

\subsubsection{Fixed Vortex, $\lambda$ Moving}
We analyze here the case in which vortex are fixed and the intensity of the Random field changes. We'll see, as claimed at the start of the section that the quadratic decay is no longer visible and the initial condition produce a very unstable numerical system.\\
We fix the vortex number at 200, the choice is made to obtain a fast enough system for the required simulation to be made even with very slow random field intensity, but selecting also a number of vortex that already showed a nice decay in time.\\

\begin{figure}[!tbp]
  \centering
  \begin{minipage}[b]{0.5\textwidth}
    \includegraphics[width=\textwidth]{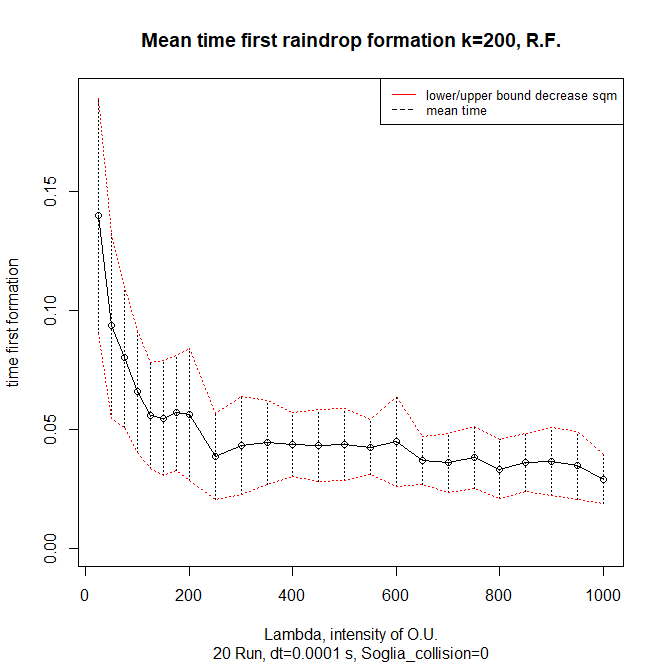}
  \end{minipage}
  \hfill
  \begin{minipage}[b]{0.45\textwidth}
    \includegraphics[width=\textwidth]{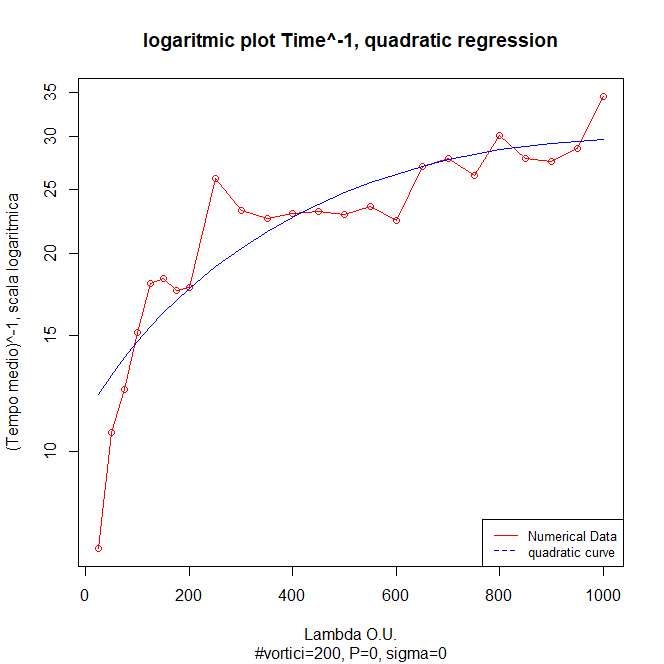}
  \end{minipage}
  \caption{Left: numerical plot of mean time with standard deviation. It is shown a decay in time when the intensity increase, with also a decrease in the standard deviation; Right: Lin-Log plot quadratic regression of mean time with varying $\lambda$, fixed vortex number.}
\end{figure}

We first performed a quadratic regression from which the residual plot showed no sign of residual structure. The residual error and the standard deviation was overall higher, even with high intensity parameter that should make coagulation faster. This suggested, as we expected from the theoretical conjecture, that the influence of the intensity in the time structure is less strong than the number of vortex-like structures in the field.\\
The analysis show a less degree of understanding of the data, and while R-squared is still high, we expect that power of lower order describe the decay curves in a more robust way. We report here the complete summary of the regression performed in Fig.5 (right).\\
\textbf{Quadratic Regression Summary Moving $\lambda$}
\begin{verbatim}
Residual standard error: 2.747 on 21 degrees of freedom
Multiple R-squared:  0.8392,	Adjusted R-squared:  0.8239 
F-statistic:  54.8 on 2 and 21 DF,  p-value: 4.633e-09
\end{verbatim}

To see if a regression with lower power of the parameters can explains better the decay of time formation, we performed a logarithmic regression, plotted in Fig.6. From the summary below, we see that more structure is captured when this type of analysis is performed, suggesting that a non linear regression of the form $\sim a/(1+b\cdot\lambda)$ is the right expected decay for the time as a function of the intensity parameter.\\
\textbf{Logarithmic Regression Summary Moving $\lambda$}
\begin{verbatim}
Residual standard error: 0.1084 on 22 degrees of freedom
Multiple R-squared:  0.9145,	Adjusted R-squared:  0.9106 
F-statistic: 235.3 on 1 and 22 DF,  p-value: 3.135e-13
\end{verbatim}

\begin{figure}[h]
\centering
\includegraphics[width=0.5\textwidth]{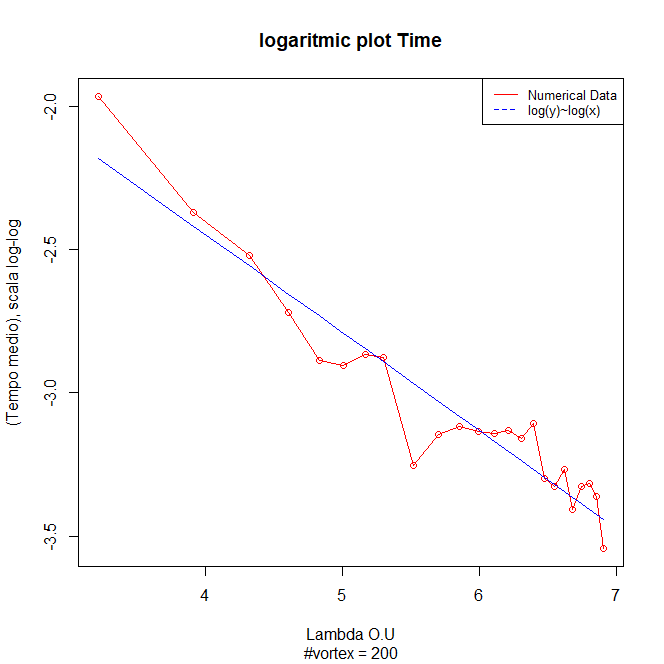}
\caption{Logarithmic Regression: Log-Log plot regression mean time with varying $\lambda$, fixed vortex number.}
\end{figure}

To this end, to get a more precise estimation on the behaviour of the first formation time $\tau_f(\lambda)$, with fixed vortex, we performed a non linear regression on a more refined sample of intensity.\\
As show in Fig.7 we iterate the system for intensity $\lambda\in[100,1100]$ with step of $10$.\\
After the logarithmic regression, we expect that we have a dependence on the parameter of the form $\sim \frac{a}{(b+\lambda)}$. For this reason we set $a_{in}=0.08,\ b_{in}=0.048$ starting parameter and we perform a regression that shown in Fig.7.\\
As we can see from the summary below the residual error equal to 0.0029 with a correlation with the numerical data equal to 0.79. As we see no structure is present in the residual plot and the decay is captured by the regression curve.\\
In conclusion when the intensity of the noise increase we still see a decay in time, but since the noise interact only in the time structure the decay has lower degree.\\
\\
\textbf{Non Linear Regression $\sim a/(1+b\cdot\lambda)$ Summary:}
\begin{verbatim}
residual sum-of-squares: 0.002907
Achieved convergence tolerance: 1.489e-06
cor(NumericalData,NonLinearModel)=0.7997127
\end{verbatim}
\begin{figure}[!tbp]
  \centering
  \begin{minipage}[b]{0.5\textwidth}
    \includegraphics[width=\textwidth]{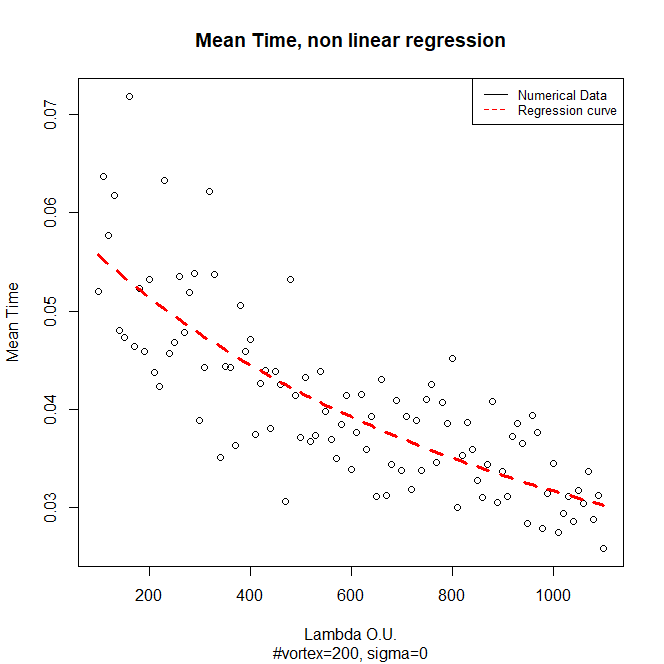}
  \end{minipage}
  \hfill
  \begin{minipage}[b]{0.45\textwidth}
    \includegraphics[width=\textwidth]{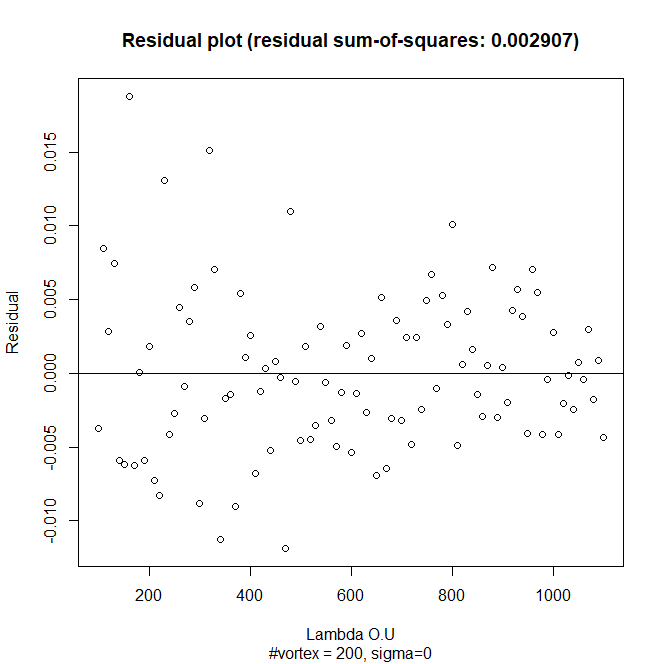}
  \end{minipage}
  \caption{Left:Non-linear Regression for moving $\lambda$ and fixed vortex. Expected decay as $\sim \lambda^{-1}$; Right: Residual plot. There is no presence of structure in the residual.}
\end{figure}

\printbibliography 

\end{document}